\documentclass[reqno,11pt]{amsart}
\usepackage{amsmath, latexsym, amsfonts, stmaryrd, mathabx, amsthm, amscd, array, caption}
\usepackage[toc,page]{appendix}
\usepackage[utf8]{inputenc}
\usepackage{graphics,epsf,psfrag,epsfig}
\usepackage[colorlinks=true, pdfstartview=FitV, linkcolor=blue, citecolor=blue, urlcolor=blue,pagebackref=false]{hyperref}
\usepackage{bm}
\usepackage{a4wide}
\usepackage[english]{babel}
\usepackage{amssymb}
\usepackage{amsfonts, dsfont}
\usepackage{amsthm}
\hypersetup{colorlinks,citecolor=red,filecolor=purple,linkcolor=blue,urlcolor=black}
\usepackage[T1]{fontenc}
\usepackage{enumerate}
\usepackage{relsize}
\usepackage{mathtools}
\usepackage{bbm}

\newcommand{\N}{\mathbb N}
\newcommand{\R}{\mathbb R}
\newcommand{\Z}{\mathbb Z}
\newcommand{\E}{\mathbb E}
\newcommand{\cE}{\mathcal E}
\newcommand{\CC}{{\rm CC}}

\newcommand{\CCc}{\widehat{\rm CC}}
\newcommand{\CCav}{{\rm CC_{av}}}

\renewcommand{\P}{\mathbb P}

\newcommand{\1}[1]{\mathds 1_{\left\{#1\right\}}}

\newcommand{\e}{{\rm e}}
\renewcommand{\l}{{\nu}}
\renewcommand{\a}{\alpha}

\newcommand{\pg}{\mathbb P}
\newcommand{\pp}{\mathrm P}
\newcommand{\ep}{\mathrm E}
\newcommand{\vp}{\mathrm V}
\newcommand{\pw}{P}
\newcommand{\ew}{E}
\newcommand{\pe}{ P^\eta}

\def\be{\begin{eqnarray}}
\def\ee{\end{eqnarray}}
\def\ben{\begin{eqnarray*}}
\def\een{\end{eqnarray*}}

\newtheorem{proposition}{Proposition}[section]

\newtheorem{lemma}[proposition]{Lemma}

\newtheorem{theorem}[proposition]{Theorem}
\newtheorem{remark}[proposition]{Remark}

\newtheorem*{assumptionA2'}{Assumption A2'}

\newcounter{example}
\setcounter{example}{0}

\title[Continuum scale-free percolation: degree and clustering coefficient]{Scale-free percolation in continuum space: \\ quenched degree and clustering coefficient}

\author[J.\ Dalmau]{Joseba Dalmau}
\address{Joseba Dalmau.  {Institute of Mathematical Sciences, NYU Shanghai.
		Geography Building, 3663 North Zhongshan Road, Shanghai, China}}
\email{jdc16@nyu.edu}

\author[M.\ Salvi]{Michele Salvi}
\address{Michele Salvi.  {CMAP, D\'epartement de Math\'ematiques Appliqu\'ees, \'Ecole Polytechnique, Route de Saclay, 91128 Palaiseau Cedex, France
		and 
		MaIAGE, INRA, Université Paris-Saclay, 78350 Jouy-en-Josas, France
		}}
\email{michele.salvi@polytechnique.edu}

\begin{document}

\begin{abstract}
	Spatial random graphs capture several important properties of real-world networks.
	We prove quenched results for the continuum space version of scale-free percolation introduced in \cite{DW18}. This is an undirected inhomogeneous random graph whose vertices are given by a Poisson point process in $\R^d$. Each vertex is equipped with a random weight and the probability that two vertices are connected by an edge depends on their weights and on their distance. Under suitable conditions on the parameters of the model, we show that, for almost all realizations of the point process, the degree distributions of all the nodes of the graph follow a power law with the same tail at infinity. We also show that the averaged clustering coefficient of the graph is {\it self-averaging}. In particular,  it is almost surely equal to the the annealed  clustering coefficient of one point, which is a strictly positive quantity.
	

	\medskip
	
	\noindent  \emph{AMS  subject classification (2010 MSC)}: 
	05C80 
	05C63 
	05C82  	
	05C90  	
	
	\smallskip
	
	\noindent
	\emph{Keywords}: Random graph, scale-free percolation, degree distribution, clustering coefficient, small world, Poisson point process, self-averaging.

\end{abstract}

\maketitle

\section{Introduction}
Random graphs are a powerful tool to model real-world large networks such as the internet \cite{AJB99}, 
telecommunication networks \cite{HABDF09}, social networks \cite{NWS02}, neural networks \cite{LHCS00}, transportation networks \cite{KKGB10}, financial systems \cite{CMS10} and many more (see e.g.~\cite{N03, V16} for overviews). The idea is to overcome the intractability of a given network, for example because of its size, by mimicking its most interesting features with a probabilistic model.  In particular, three main characteristics have been highlighted in the recent  literature that are  often observed in real-life  (see e.g.~\cite{CF16, HHJ17, JM15}):
\begin{itemize}
	\item {\it Scale-free}: A graph is said to exhibit the scale-free property if the degree (i.e.~the number of neighbors) of its vertices follows a power law.  This results in the presence of hubs, that is, nodes with a very high degree.
	
	\smallskip
	
	\item {\it Small-world}: When sampling two vertices at random, their graph distance (the minimum number of edges to cross to go from one vertex to the other) is typically of logarithmic order on the total number of  nodes. If the nodes of the graph are embedded in a metric space, for example in $\R^d$, this translates into saying that two nodes at Euclidean distance $D$ are with high probability at graph-distance $O(\log D)$. 
	
	\smallskip
	
	\item {\it High clustering coefficient}: When two vertices share a common neighbor, there is a ``high'' probability that they are linked, too.
\end{itemize}
These properties are of great importance when studying, for example, the spread of information or infections (see e.g.~\cite{PV01, MN00, CL14, REIMT15}) or other related random processes on the network (like first passage percolation \cite{KL18}, Ising model \cite{BW00} or random walks \cite{HHJ17}).
Unfortunately, classical models of random graphs do not exhibit these three properties at once. Just to give a few examples, the Erd\H{o}s-R\'enyi model has only the small-world property, the Chung-Lu \cite{CL02}, Norros-Reittu \cite{NR06} and preferential attachment \cite{BA99} models are scale-free and small-world, but all have vanishing clustering coefficient as the number of nodes tends to infinity.
As a rule of thumb, one can think that regular, nearest-neighbor lattices have high clustering but long distances, while classic random graphs have low clustering and small distances. The Watts-Strogatz network \cite{WS98} was one of the first attempts to ``artificially'' build a random graph that boasts high clustering and small-world, but it lacks the scale-free property.

\smallskip

Spatial random graphs, i.e.~graphs whose vertices are embedded in some metric space, offer a possible solution to the shortcomings of classical models. 
 While in the Physics literature they have been quite extensively studied (see \cite{B11} for a broad review on the topic), very few models have been analyzed in a fully rigorous, mathematical fashion. Among those which exhibit or are conjectured to exhibit the three properties mentioned above, we mention the hyperbolic random graph 
 \cite{GPP12}, the spatial preferential attachment model \cite{JM15},
the geometric inhomogeneous random graph \cite{BKL18} 
and scale-free percolation.


\smallskip

Scale-free percolation has been introduced in \cite{DVH13} and can be considered a combination of long range percolation (see e.g.~\cite{B02, B04})  and inhomogeneous random graphs as the Norros-Reittu model.
The vertices of the graph lie on the $\Z^d$ lattice and random weights $\{W_x\}_{x\in\Z^d}$ are assigned independently to each of them. The distribution of the weights follows a power law: we have $P(W>w)=w^{-(\tau-1)}L(w)$ for some $\tau>1$ and $L$ function slowly varying at infinity. Finally, any two vertices $x,y\in\Z^d$ are linked by an unoriented 
edge with probability 
\begin{align*}
p_{x,y}=1-\e^{-\lambda W_xW_y/\|x-y\|^\alpha}\,,
\end{align*}
where $\lambda, \alpha>0$ are two parameters of the model and $\|\cdot\|$ is the Euclidean distance. In \cite{DHW15} it is argued that scale-free percolation is a suitable model, for example, for  the interbank network presented in \cite{SBAGB07}.
 In  \cite{DW18} the authors have introduced the continuum space counterpart of scale-free percolation, where the vertices of the graph are sampled according to a homogeneous Poisson point process of intensity $\l>0$ in $\R^d$. On the one hand, this additional source of randomness makes the model more flexible and possibly more suitable for applications to real-world networks. On the other hand, technical difficulties emerge, especially when dealing with fixed configurations of the point process. The probability of linking two vertices is the same as in \cite{DVH13}, but the authors prefer to restrict to a Pareto distribution for the weight distribution: $P(W>w)= w^{-(\tau-1)}\1{w>1}$ (note that in the present paper we chose to stick to the notation of  \cite{DVH13}). We point out that the results appearing in  \cite{DW18} are {\it annealed} -- that is, obtained after integration against the underlying Poisson point process. 

\smallskip

Both original scale-free percolation model of \cite{DVH13} and the continuum scale-free percolation of \cite{DW18} share the following features. If $\alpha\leq d$ or $\gamma:=\alpha(\tau-1)/d\leq 1$, then the nodes of the graph have almost surely infinite degree. If instead $\min\{\alpha,(\tau-1)\alpha\}>d$, then the degree of the vertices follows a power law of index $\gamma$ (hence the graph has the {\it scale-free} property). 
Let  now $\lambda_c$ be the percolation threshold of the graph, that is, the value such that for  $\lambda<\lambda_c$ all the connected components of the graph are finite and for $\lambda>\lambda_c$ there exists almost surely a (unique) infinite connected component. Assume $\min\{\alpha,(\tau-1)\alpha\}>d$. Then, again for both models, the following holds: for $d\geq 2$, if $\gamma\in(1,2)$ then $\lambda_c=0$, while if $\gamma>2$, then $\lambda_c\in(0,\infty)$; for $d=1$, if $\gamma\in(1,2)$ then $\lambda_c=0$, if $\gamma>2$ and $\alpha\in(1,2]$ then $\lambda_c\in(0,\infty)$ and if $\min\{\alpha,(\tau-1)\alpha\}>2$ then $\lambda_c=\infty$. Once the percolation properties have been established, 
one can talk about graph-distances (for the discrete-space model the results on percolation and distances of  \cite{DVH13} have been complemented in \cite{DHW15}). The graph-distance between two nodes $x$ and $y$ belonging to the same connected component of the graph is the minimum number of edges that one has to cross to go from $x$ to $y$. 
 Assume again $\min\{\alpha,(\tau-1)\alpha\}>d$. Again for both models,
when the vertex degrees have infinite-variance (corresponding to $\gamma\in(1,2)$), the graph-distance between two points belonging to the infinite component of the graph grows like the $\log\log$ of their Euclidean distance (in this case the graph is said to exhibit the  {\it ultra-small world} property). When the vertex degrees have finite variance ($\gamma >2$) and $\lambda>\lambda_c$, two cases are possible: if $\alpha\in(d,2d)$, then the graph distance of two points at Euclidean distance $D$ grows roughly as $(\log D)^\Delta$ for some constant $\Delta>0$ still not precisely known ({\it small-world} property);   if $\alpha>2d$ the graph distance is bounded from below by a constant times the Euclidean distance. We point out that the regime where the degrees have infinite ($\gamma\in(1,2)$ in our case) is usually the relevant one for applications (see, e.g., \cite{AJB99, JTAOB00}).

%

\smallskip

%

%
%
%
%

\subsection{Our contribution}
In the present paper we aim at proving {\it quenched} results for scale-free percolation in continuum space, that is, statements that hold for almost every realisation $\eta$ of the underlying Poisson point process. This is one of the main differences with  \cite{DW18}: taking the annealed measure therein allows the authors to calculate relevant quantities explicitly. Unfortunately, in most cases, annealed properties give little information about  a {\it given} configuration $\eta$. Besides their mathematical interest, quenched results are also important because they guarantee, for example, that a simulation of the graph will always exhibit a given feature. We also assume more general distributions for the weights
, not restricting to Pareto.

\smallskip

We start by analysing the tail of the degree distribution. The regime of parameters that ensures infinite degree for all the vertices is the same as in \cite{DVH13} and \cite{DW18}, see Theorem \ref{infinitedegree}. More interesting is the case of finite degrees.
In  \cite{DW18}, under the condition of weights that follow a Pareto distribution, it was possible to calculate the whole distribution of the annealed degree of a  vertex just by integrating over the Poisson measure. While there is clearly no chance that this works for all the vertices of the graph for a fixed configuration $\eta$, we show that the behavior of the tail of the degree of {\it all} vertices is the same (hence the graph is {\it scale-free}).  More precisely, we show (see Theorem \ref{degree}) that for almost every $\eta$ the following holds: for all $x\in\eta$ there exists a slowly varying function $\ell(\cdot)=\ell(\cdot,\eta,x)$ such that the probability that $D_x$, the degree of $x$, is bigger than some $s>0$ is equal to $\ell(s)s^{-\gamma}$, where $\gamma :=\alpha(\tau-1)/d$.
Our proof follows the strategy of \cite{DVH13} rather than \cite{DW18}, but a major difference emerges already when calculating the expectation of the degree of a vertex given the value $w>0$ of its weight. While in \cite{DVH13} it is shown by direct computation that the expectation is equal up to a constant to $\xi w^{d/\alpha}$ for an explicit $\xi>0$, in our case there is  a correction of the order $w^{d/(2\alpha)}$ that accounts for the fluctuations of the  Poisson point process  (see Proposition \ref{marmotta}). We apply concentration inequalities in combination with the so-called Campbell theorem to achieve this result. 
With some further effort we prove that the result holds for all the points of $\eta$ at once.

\smallskip

We move then to the clustering coefficient of the graph, which has not been studied neither in \cite{DVH13} nor in \cite{DW18}. For a given graph, the {\it local} clustering coefficient $\CC(x)$ of a node $x$ is given by $\Delta_x$, the number of triangles with a vertex in $x$ (that is, triplets of edges of the form $(x,y),(y,z),(z,x)$), divided by $D_x(D_x-1)/2$. This second quantity represens the number of ``possible'' triangles with a vertex in $x$, also called open triangles. The  {\it averaged} clustering coefficient of a finite graph is the average of $\CC(x)$ over all its vertices (there is also a notion of {\it global} clustering coefficient that we do not analyze here). For an infinite graph like ours, we define the averaged clustering coefficient as the limit (if it exists) for $n\to\infty$  of $\CC_n$, where $\CC_n$ is the average of the local clustering coefficients of the vertices inside the $d$-dimensional box of side-length $n$ centered at the origin.  We show that $\CC_n$ is {\it self-averaging} (a similar property has been recently proved  in \cite{SVV18} for the hyperbolic random graph when the number of nodes goes to infinity), so that its limit exists and is almost surely equal to the expectation of the local clustering coefficient of the origin $0$ obtained under the Palm measure associated with the Poisson point process with a point added at $0$ (see Theorem \ref{thmcc}). We also show that this quantity is strictly positive ({\it high clustering coefficient}).
The proof consists in dividing the box of size $n$ into mesoscopic boxes of side-length $m<n$. We then approximate the clustering coefficient of each $m$-box by a truncated clustering coefficient that does not take into account neither vertices that are close to the border of the boxes nor edges connecting nodes in different $m$-boxes. In doing so, we obtain independent truncated clustering coefficients in each mesoscopic box and we can use the law of large numbers as $n\to\infty$. In decorrelating the clustering coefficient of each $m$-box, we must control the correlation of the local clustering coefficient of vertices lying in different $m$-boxes. To do so, we use a second moment approach in combination with the Slivnyak-Mecke theorem. 
The technique we use should be applicable also to the original scale-free percolation model of \cite{DVH13}. It is interesting to note that the positivity of the clustering coefficient does not depend on the local connectivity properties of the graph, see Remark \ref{marco}, nor on $\l$.

\smallskip

Finally we point out that, at least under the hypothesis of Pareto weights, the results on percolation and graph-distances presented in the introduction have annealed probability equal to $1$, see \cite[Theorem 3.2, Theorem 3.6]{DW18}. Hence, they hold also in the quenched sense (see Remark \ref{rimarco} to fill a little difference between our parameters and those of \cite{DW18}). It follows that under the right range of parameters, scale-free percolation in continuum space fulfils, for almost every realization of the Poisson point process $\eta$, all the three properties presented in the introduction at once,  almost surely. This suggests  that the model is a good candidate for modelling real spatial networks.  We believe, for example, that it encloses the main features of the cattle trading network in France (see \cite{DEV14}), which was the original motivation for our work. The nodes of the graph are given by farms or cattle markets in the country, so that the hypothesis of vertices placed according to a Poisson point process reflects in a more realistic way than $\Z^d$ the geographic irregularities. An edge is placed between two holdings if there has been a transaction between the two during, say, the year. A comprehension of the topology of this network might be of paramount importance when studying the possible outbreak  of an infection in the cattle population. This kind of analysis will be presented in a subsequent work. 

\smallskip

\subsection{Structure of the paper}
We will introduce the model and some notation in Section \ref{model}. We will present the main results on the degree distribution (Theorem \ref{infinitedegree} and Theorem \ref{degree}) and on the clustering coefficient (Theorem \ref{thmcc}) in Sections \ref{mainresults} and \ref{mainresults2}. We deal with the proof of the infinite degree case in Section \ref{secinfinitedegree} and of the finite degree case in Section \ref{secpolynomialdegree}. Section \ref{secclusteringcoefficient} is dedicated to the proof of the positivity of the clustering coefficient. In Appendix \ref{campbell} we recall Campbell and Slivnyak-Mecke theorems.

\bigskip

\section{Model and main results}
\subsection{The model}\label{model}
Scale-free percolation in continuum space is a random variable on the space of all simple spatial undirected random graphs with vertices in $\R^d$. 
To construct an instance of the graph $G=(V,E)$ we proceed in three steps:
\begin{itemize}
	\item We sample the nodes $V$ of the graph according to a homogeneous Poisson point process in $\R^d$ of intensity $\l>0$. We denote by $\pp$ its law and by $\ep$ the expectation w.r.t.~$\pp$. A configuration of the point process will be denoted by $\eta\in(\R^d)^\N$.
	\item We assign to each vertex $x\in\eta$ a random weight  $W_x>0$. The weights are i.i.d.~with law $\pw$. The distribution function $F$ associated to $\pw$ is regularly varying at infinity with exponent $\tau-1$, that is
	\begin{align}\label{codapesi}
	1-F(w)=\pw(W>w)=w^{-(\tau-1)}L(w)
	\end{align}
	where $ L(\cdot)$ is a function that is slowly varying at $+\infty$. 
	\item We finally draw the edges $E$ of the random graph via a percolation process. For every pair of points $x,y\in\eta$ with weights $w_x,w_y$, the undirected edge $(x,y)$ is present with probability
	\begin{align}\label{connectingprob}
	1-\exp\frac{w_x w_y}{\|x-y\|^{\alpha}}\,,
	\end{align}
	where $\|\cdot\|$ denotes the Euclidean norm in $\R^d$ and $\alpha>0$ is a parameter. When two vertices $x,y$ are connected by an edge we write $x\leftrightarrow y$ (if they are not connected we write $x\not\leftrightarrow y$). For the event that a vertex $x$ is connected to some other point in a region $A\subseteq\R^d$ we write  $x\leftrightarrow A$. 
\end{itemize}

\begin{remark}\label{rimarco}
	In \cite{DW18} an additional parameter $\lambda>0$ appears in the definition of the linking probabilities: $p_{x,y}=1-\exp\{-\lambda W_xW_y\|x-y\|^{-\alpha}\}$. In this paper we prefer to fix $\lambda=1$ without loss of generality. Indeed, having parameters $\lambda=\tilde\lambda$ and $\l=\tilde\l$ in the model presented in \cite{DW18} is completely equivalent to choosing $\l=\tilde\l \tilde\lambda^{d/\alpha}$ in our case.
\end{remark}

For a given realization $\eta$ of the Poisson point process we will often perform the last two steps at once under the {\it quenched} law $\pe$, with $E^\eta$ denoting the associated expectation. This is the law of the graph once we have fixed  $\eta$. We write $\pg$ for the {\it annealed} law of the graph, that is, $\pg=\pp\times\pe$, and $\E$ for the corresponding expectation.
For $x\in\R^d$, we indicate by $\pp_x$   the law of the Poisson point process conditioned on having a point in $x$; analogously, for $x,y\in\R^d$, the measure $\pp_{x,y}$ is obtained by conditioning on having one point in $x$ and one in $y$. We will not enter in the details of ($n$-fold) Palm measures, but we point out that in both cases the conditioning does not influence the rest of the Poisson point process, see e.g.~\cite[Chap.~13]{DVJ}. Consequently we will call $\P_x =\pp_x\times P^\eta=\pp\times P^{\eta\cup \{x\}}$ the law of the graph conditioned on having a point at $x\in\R^d$ and $\E_x$ the associated expectation. Analogously we will write $\P_{x,y}=\pp_{x,y}\times P^\eta=\pp\times P^{\eta\cup \{x\}\cup\{y\}}$ and $\E_{x,y}$ for the expectation with respect to $\P_{x,y}$.

\smallskip

We denote by $0$ the origin of $\R^d$. We call $B_n$ the $d$-dimensional box of side-length $n>0$ centered in $0$. We let $V_n=V_n(\eta)$ be the  set of points of $\eta$ that are in $B_n$ and $N_n=N_n(\eta)$ their number. Finally, $\mathcal B_r(x)$ denotes the Euclidean ball of radius $r>0$ centered in  $x\in\R^d$.


\subsection{Results on the degree}\label{mainresults}
For each $x\in\eta$ we call $D_x:=\#\{y\in\eta:\,y\leftrightarrow x\}$ the degree of a vertex $x$ in the random graph, 
that is, the random variable counting the number of neighbors of $x$.
We start by showing that for certain values of the parameters 
the graph is almost surely degenerate, in the sense that all of its vertices have infinite degree.

\begin{theorem}\label{infinitedegree}
	Suppose one of the following conditions is satisfied:
	\begin{itemize}
		\item[(a)]  $\alpha\leq d$ ;
		\item[(b)] the weight distribution satisfies 
		\begin{align}\label{rocco}
		1-F(w)\geq c\,w^{-(\tau-1)}
		\end{align}
		for some $c>0$ and $\tau>1$ such that $\gamma:=\alpha(\tau-1)/d\leq 1$ .
	\end{itemize}
	Then for $\pp$--almost every $\eta$,  $P^\eta$--almost surely
	all points in $\eta$ have infinite degree.
\end{theorem}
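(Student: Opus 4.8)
The plan is to strip away, one at a time, the three independent sources of randomness --- the edges, the weights, and the point process --- reducing the theorem to a single deterministic divergence property of the Poisson configuration that can then be forced to hold at \emph{every} point of $\eta$ simultaneously.

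Fix $\eta$ and a vertex $x\in\eta$, and work under $\pe$. Conditionally on $\eta$ and on the whole weight family $(W_y)_{y\in\eta}$, the edge indicators $\big(\1{x\leftrightarrow y}\big)_{y\neq x}$ are independent Bernoulli variables of parameters $p_{xy}:=1-\exp(-W_xW_y/\|x-y\|^\alpha)$, so by the two Borel--Cantelli lemmas $D_x=\infty$ if and only if $\sum_{y\in\eta,\,y\neq x}p_{xy}=\infty$. Conditioning now on $W_x=w$, the terms $p_{xy}$ are independent in $y$ and lie in $[0,1]$; for a series of independent, bounded, nonnegative variables Kolmogorov's three-series theorem tells us that $\sum_y p_{xy}=\infty$ almost surely exactly when its mean diverges, i.e.\ when
\[
\sum_{y\in\eta,\,y\neq x} g\big(w,\|x-y\|\big)=\infty,\qquad g(w,r):=\ew\!\left[1-\exp\!\big(-wW/r^\alpha\big)\right].
\]
Since $g(w,r)$ is nondecreasing in $w$ for fixed $r$ and $W_x>0$, it suffices to prove this divergence for each fixed $w>0$.

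The next step is a lower bound on $g$. From $1-\e^{-t}\ge(1-\e^{-1})\1{t\ge1}$ we get $g(w,r)\ge(1-\e^{-1})\,\pw\!\big(W>r^\alpha/w\big)$, and under hypothesis~(b) the tail bound \eqref{rocco} turns this into $g(w,r)\ge C(w)\,r^{-\alpha(\tau-1)}$ for $r$ large. Under hypothesis~(a), instead, fixing a level $w_0$ with $\pw(W>w_0)>0$ and using $1-\e^{-t}\ge t/2$ for small $t$ gives $g(w,r)\ge C(w)\,r^{-\alpha}$. In both cases $g(w,r)\ge C(w)\,r^{-\beta}$ for all large $r$, with $\beta=\alpha$ under~(a) and $\beta=\alpha(\tau-1)$ under~(b); crucially, the hypotheses are precisely what guarantees $\beta\le d$ (namely $\alpha\le d$, resp.\ $\gamma\le1$). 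Thus everything reduces to the geometric claim: for any $\beta\le d$ and $\pp$--a.e.\ $\eta$, every $x\in\eta$ satisfies $\sum_{y\in\eta,\,y\neq x}\|x-y\|^{-\beta}=\infty$.

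For a single center the claim follows from a dyadic-annulus estimate: the shell $\{2^k\le\|y\|<2^{k+1}\}$ carries $N_k$ points with $\ep[N_k]\asymp\l\,2^{kd}$, each contributing at least $2^{-(k+1)\beta}$, and Poisson concentration together with Borel--Cantelli yields $N_k\ge\tfrac12\ep[N_k]$ for all large $k$, so the sum dominates a positive constant times $\sum_k 2^{k(d-\beta)}$, which is infinite since $\beta\le d$. To obtain the claim at \emph{all} points of $\eta$ at once I would appeal to the Mecke formula (see Appendix~\ref{campbell}): writing $B_x$ for the bad event that the sum converges,
\[
\ep\big[\#\{x\in\eta:\,B_x\}\big]=\l\int_{\R^d}\pp\Big(\textstyle\sum_{y\in\eta}\|x-y\|^{-\beta}<\infty\Big)\,\dif x,
\]
and by stationarity the integrand is constant in $x$ and equals $0$ by the single-center result, so $\pp$--a.s.\ no point of $\eta$ is bad. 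Combining the four steps, for $\pp$--a.e.\ $\eta$ the series $\sum_y g(w,\|x-y\|)$ diverges for every $x\in\eta$ and every $w>0$, whence $\pe$--a.s.\ $D_x=\infty$ for each such $x$; intersecting over the countably many $x\in\eta$ completes the argument. I expect the genuine obstacle to lie in this last passage from one point to all points simultaneously: the quantifier order ``for a.e.\ $\eta$, for all $x\in\eta$'' cannot be handled by a union bound over the random infinite vertex set and instead forces the use of Palm/Mecke calculus, whereas the single-center divergence and the three-series reduction, though requiring some care, are routine.
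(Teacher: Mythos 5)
Your proof is correct, but it takes a genuinely different route from the paper's. For the single-point divergence, the paper invokes the ``if and only if'' part of Campbell's theorem: for $\pp$-a.e.~$\eta$ the sum $\sum_{x\in\eta}\ew\big[1-\e^{-wW\|x\|^{-\alpha}}\big]$ diverges exactly when $\int_{\R^d}\ew\big[1-\e^{-wW\|y\|^{-\alpha}}\big]\,\dif y$ does, and the integral is checked under (a) and (b) by polar coordinates; you instead first lower-bound the summand by $C(w)\|x-y\|^{-\beta}$ with $\beta\le d$, which reduces everything to a weight-free geometric series whose divergence you get by dyadic annuli plus Poisson concentration --- more elementary, and it conveniently decouples the geometry from $w$. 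The larger divergence is in the one-point-to-all-points step: contrary to your closing expectation, the paper does \emph{not} use Palm/Mecke calculus there. It argues deterministically: for any $\eta$ in the good set and any $y\in\eta$, the terms of the sum centered at $y$ are asymptotically equivalent to those centered at the origin, since $\ew\big[1-\e^{-wW\|x-y\|^{-\alpha}}\big]\big/\ew\big[1-\e^{-wW\|x\|^{-\alpha}}\big]\to 1$ as $\|x\|\to\infty$, so divergence at $0$ transfers to every $y\in\eta$ for the same configuration. You instead bound the expected number of ``bad'' points by the Mecke formula and stationarity, getting zero. Both arguments are sound: your Mecke route is more systematic and robust (it needs only the single-center statement, not the specific form of the connection function), while the paper's ratio-comparison trick is softer and shows that no Palm machinery is actually required for this theorem.
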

We point out that the proof of Theorem \ref{infinitedegree} follows immediately from the analogous result of \cite{DW18} when the weights have  a Pareto distribution. We will provide an alternative proof that, besides covering more general distribution functions $F$, serves as a warm-up for the kind of techniques we will use more intensively later on, based on Campbell's theorem.

\smallskip

We move to the analysis of the regime where the degree of the nodes is almost surely finite. In this case, we show that for almost all $\eta$ the graph is scale-free and the degree of all points follows a power law with the same exponent.

\begin{theorem}\label{degree}
	Suppose that $\a>d$ and $\gamma:=\a(\tau-1)/d>1$. Then, for $\pp$--almost every $\eta$, the following holds: for each $x\in\eta$ there exists a slowly varying function $\ell(\cdot)=\ell(\cdot,\eta,x)$ such that
	\begin{align}\label{notorious}
	P^\eta(D_x>s)=s^{-\gamma}\ell(s)\,.
	\end{align}
\end{theorem}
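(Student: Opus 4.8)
The strategy is to transfer the power-law tail of the weight $W_x$ to the degree $D_x$, using that conditionally on its weight the degree is a sum of independent indicators that concentrates sharply around its mean. Fix a realisation $\eta$ in the full-measure set on which Proposition \ref{marmotta} holds simultaneously for every $x\in\eta$, and fix one such $x$. Conditioning on $W_x=w$ under $P^\eta$, we have $D_x=\sum_{y\in\eta\setminus\{x\}}\1{x\leftrightarrow y}$; since the weights $\{W_y\}$ and the edge randomness are independent across $y$, after integrating out $W_y$ these indicators are independent Bernoulli variables. Thus, conditionally on $W_x=w$, the degree $D_x$ is a sum of independent indicators with mean $\mu_x(w):=E^\eta[D_x\mid W_x=w]=\xi\,w^{d/\alpha}+O(w^{d/(2\alpha)})$ by Proposition \ref{marmotta}, and with variance at most $\mu_x(w)$.

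The concentration step uses only this structure. By Bernstein's and Chernoff's inequalities for sums of independent indicators, deviations of $D_x$ from $\mu_x(w)$ larger than $\sqrt{\mu_x(w)}\asymp w^{d/(2\alpha)}$ are exponentially unlikely. Fix $\epsilon\in(0,1/2)$ and set $w_\pm(s):=\big((s\pm s^{1/2+\epsilon})/\xi\big)^{\alpha/d}$, so that $\xi\,w_\pm(s)^{d/\alpha}=s\pm s^{1/2+\epsilon}$. Because the correction term in $\mu_x$ is only $O(w_\pm(s)^{d/(2\alpha)})=O(\sqrt{s})$, negligible against $s^{1/2+\epsilon}$, one gets $\mu_x(w)\le s-\tfrac12 s^{1/2+\epsilon}$ for $w\le w_-(s)$ and $\mu_x(w)\ge s+\tfrac12 s^{1/2+\epsilon}$ for $w\ge w_+(s)$, once $s$ is large. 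The deviation bounds then give, for some $c=c(\eta,x)>0$,
\[
P^\eta(D_x>s\mid W_x=w)\le e^{-c s^{2\epsilon}}\ (w\le w_-(s)),\qquad P^\eta(D_x\le s\mid W_x=w)\le e^{-c s^{2\epsilon}}\ (w\ge w_+(s)),
\]
uniformly over the indicated ranges; for the lower tail one checks that the Chernoff exponent $(\mu_x(w)-s)^2/(2\mu_x(w))$ only grows as $w$ increases past $w_+(s)$.

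Integrating $P^\eta(D_x>s)=\int_{(0,\infty)}P^\eta(D_x>s\mid W_x=w)\,dF(w)$ and splitting at $w_\mp(s)$ yields
\[
P(W_x>w_+(s))\big(1-e^{-cs^{2\epsilon}}\big)\ \le\ P^\eta(D_x>s)\ \le\ P(W_x>w_-(s))+e^{-cs^{2\epsilon}}.
\]
Now $P(W_x>w)=w^{-(\tau-1)}L(w)$ with $L$ slowly varying and $w_\pm(s)/(s/\xi)^{\alpha/d}\to1$, so by the uniform convergence theorem for slowly varying functions both $P(W_x>w_\pm(s))$ are asymptotic to $\xi^{\gamma}s^{-\gamma}L\big((s/\xi)^{\alpha/d}\big)$, where $\gamma=\alpha(\tau-1)/d$. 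Since this quantity decays only polynomially (up to a slowly varying factor), the term $e^{-cs^{2\epsilon}}$ is negligible, and the sandwich gives $P^\eta(D_x>s)\sim \xi^{\gamma}s^{-\gamma}L\big((s/\xi)^{\alpha/d}\big)$. Setting $\ell(s):=s^{\gamma}P^\eta(D_x>s)$ then gives \eqref{notorious}, and $\ell$ is slowly varying because it is asymptotically equivalent to $s\mapsto \xi^{\gamma}L\big((s/\xi)^{\alpha/d}\big)$, which is slowly varying.

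I expect the real difficulty to lie not in this transfer but in its input, namely the sharp mean estimate of Proposition \ref{marmotta} \emph{holding simultaneously for all $x\in\eta$}. The delicate point is that the Poisson fluctuations contribute a correction of exactly the same order $w^{d/(2\alpha)}$ as the intrinsic $\sqrt{\mu_x(w)}$ fluctuations of the conditional degree, so one must verify that these two effects do not shift the effective threshold by more than $o(s^{1/2+\epsilon})$ and that the error is controlled uniformly in $x$. Once Proposition \ref{marmotta} is granted, the concentration argument above is deterministic given the point positions and hence applies to every $x\in\eta$ at once, so no additional uniformisation over $x$ is required.
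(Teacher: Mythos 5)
Your transfer step (splitting the weight integral at $w_\pm(s)$, applying Bernstein/Chernoff concentration to the conditional degree, then invoking regular variation of $1-F$ and the uniform convergence theorem) is sound, and it is essentially the same mechanism the paper uses for its one-point statement, Proposition \ref{propdegree}. The genuine gap is the very first sentence of your argument: ``Fix a realisation $\eta$ in the full-measure set on which Proposition \ref{marmotta} holds simultaneously for every $x\in\eta$.'' No such set has been produced, and its existence is exactly what makes Theorem \ref{degree} harder than Proposition \ref{propdegree}. Proposition \ref{marmotta} is a Palm-measure statement about a point \emph{added} at one fixed location (the origin); its $\pp$-null exceptional set is attached to that location. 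You cannot pass from ``for each fixed location, for $\pp$-a.e.\ $\eta$'' to ``for $\pp$-a.e.\ $\eta$, for all $x\in\eta$'' by a union bound, because the points of $\eta$ are random locations rather than a fixed countable family. Your closing sentence (``no additional uniformisation over $x$ is required'') is circular: the concentration argument is indeed deterministic \emph{given} the mean asymptotics at $x$, but the validity of those asymptotics at every point of $\eta$ simultaneously is precisely the missing ingredient.

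For comparison, the paper avoids this issue altogether: it keeps Proposition \ref{marmotta} only at the origin and, for an arbitrary $x\in\eta$, constructs a deterministic stochastic domination $P^\eta(D_x>s\mid W_x=w)\ge P^\eta(D_0>s+1\mid W_0=c(x,\eta)\,w)$ with $c(x,\eta)=(1+\|x\|/\|y\|)^{-\alpha}$, $y$ being the point of $\eta$ nearest the origin (plus a specular upper bound), and then reruns the splitting argument with threshold $m(s)/c(x,\eta)$; only the origin's null set is ever used, and the $x$-dependence is absorbed into the slowly varying function. That said, the statement you assumed is true and can be proved cheaply with the Slivnyak--Mecke theorem (Theorem \ref{mecke}): letting $B$ be the $\pp$-null set of configurations around a point for which \eqref{dragotto} fails, one has
\begin{align*}
\ep\Big[\sum_{x\in\eta}\1{(\eta\setminus\{x\})-x\in B}\Big]
=\nu\int_{\R^d}\pp\big((\eta-x)\in B\big)\,{\rm d}x
=\nu\int_{\R^d}\pp\big(\eta\in B\big)\,{\rm d}x=0
\end{align*}
by stationarity, so $\pp$-a.s.\ no point of $\eta$ has a bad surrounding configuration. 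Had you included this argument, your proof would be complete, and it would in fact deliver more than the paper's theorem: every $x\in\eta$ would receive the \emph{same} asymptotics, $P^\eta(D_x>s)\sim(\nu c_0)^{\gamma}s^{-\gamma}L\big((s/\nu c_0)^{\alpha/d}\big)$, whereas the remark following Theorem \ref{degree} states that the paper's own method cannot decide whether the functions $\ell(\cdot,\eta,x)$ share a common asymptotic behavior.
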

\begin{remark}
The slowly varying function in \eqref{notorious} has clearly to depend on $x$ and $\eta$. However, it is not clear whether all $\ell$'s have the same asymptotic behavior. That is, with our proof it is not possible to say whether for $\pp$--almost every $\eta$ and $\eta'$, and for all $x\in\eta$ and $x'\in\eta'$, one has $\ell(s,\eta,x)/\ell(s,\eta',x')\to 1$ as $s\to\infty$.  
\end{remark}

\subsection{Results on the clustering coefficient.} \label{mainresults2}
The average clustering coefficient is usually defined for finite graphs as the average of the local clustering coefficients. More precisely, for a finite undirected graph $\mathcal G=(\mathcal V,\mathcal E)$ and a node $x\in \mathcal V$, we define the local clustering coefficient at $x$ as
\begin{align*}
\CC(x):=\frac{2\,\Delta_x}{D_x(D_x-1)}\,,
\end{align*}
where $\Delta_x$ is the number of closed triangles in the graph that have $x$ as one of their vertices (a closed triangle is a triplet of 
edges in $\mathcal E$ of the type $\{(y,z), (z,w), (w,y)\}$), with the convention that $\CC(x)$ is $0$ if $D_x$ is equal to $0$ or $1$. The quantity $D_x(D_x-1)/2$ can be interpreted as the number of open triangles in $x$ (an open triangle in a vertex $z$ is a couple of edges of the kind $\{(y,z),(z,w)\}$, without any requirement for the presence of the third edge that would close the triangle), so that $\CC(x)$ takes values in $[0,1]$. 
The averaged clustering coefficient of the graph $\mathcal G$ is
\begin{align*}
\CCav(\mathcal G):=\frac{1}{|\mathcal V|}\sum_{x\in \mathcal V} \CC(x)\,.
\end{align*} 
Clearly also $\CCav(\mathcal G)$ takes values in $[0,1]$.
It is not completely obvious what the definition of  averaged clustering coefficient should be for an infinite graph. We say that  an infinite spatial graph $\mathcal G=(\mathcal V,\mathcal E)$ embedded in $\R^d$ has averaged clustering coefficient $\CCav(\mathcal G)$ if the following limit exists
\begin{align*}
\lim_{n\to\infty} \frac{1}{| \mathcal V\cap B_n |}\sum_{x\in \mathcal V\cap B_n} \CC(x)=:\CCav(\mathcal G)\,,
\end{align*} 
(we work with integers for simplicity). It is possible to construct infinite graphs for which this limit does not exist. We also notice that this definition is in principle different from considering the limit of the clustering coefficients of the subgraphs of $\mathcal G$ obtained by considering only vertices in $B_n$, since $\CC(x)$ takes into account also the edges connecting points that lie out of $B_n$.

\smallskip

Going back to our model, we will call 
$$
\CC_n:=\frac{1}{N_n}\sum_{x\in V_n} \CC(x)\,,
$$
the random variable describing the averaged clustering coefficient inside the box of side $n$. 
Our main result of this section states that the averaged clustering coefficient of the scale-free percolation in continuum space exists,  is {\it self-averaging} and is strictly positive:
\begin{theorem}\label{thmcc}
	Suppose that $\a>d$ and $\gamma:=\a(\tau-1)/d>1$.
	For $\pp$--almost all configurations $\eta$, the average clustering coefficient exists $\pe$--almost surely and does not depend on $\eta$. It is given by
	\begin{align*}
	\CCav(G):=\lim_{n\to\infty}\CC_n\,
		=\E_0[\CC(0)]>0\,.
	\end{align*}
\end{theorem}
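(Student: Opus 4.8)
The quantity $\E_0[\CC(0)]$ is a well-defined constant in $[0,1]$, since $\CC(0)\in[0,1]$ pointwise; this is our candidate limit. The plan is to prove $\CC_n\to\E_0[\CC(0)]$ by a coarse-graining argument and to establish strict positivity by a separate explicit construction. Throughout I would work under the annealed law $\pg=\pp\times\pe$ and disintegrate at the end via Fubini. First I would introduce a \emph{truncated} local clustering coefficient $\CC^{(r)}(x)$, defined exactly as $\CC(x)$ but retaining only neighbours $y$ with $\|x-y\|\le r$ and only triangles all of whose vertices lie in $\mathcal B_r(x)$. Its key feature is that it is measurable with respect to the restriction of $\eta$, with its weights and its internal edges, to the ball $\mathcal B_r(x)$. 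Since $\gamma>1$ forces the degree, and hence the number of triangles through a vertex, to be a.s.\ finite (cf.\ Theorem \ref{degree}), we have $\CC^{(r)}(x)\to\CC(x)$ a.s.\ as $r\to\infty$ for each fixed $x$; boundedness by $1$ and dominated convergence then give $\E_0\bigl[|\CC(0)-\CC^{(r)}(0)|\bigr]\to0$.

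Next I would partition $B_n$ into $k^d$ boxes of side $m=n/k$ and, in each box, keep only its \emph{core}, the points at distance at least $r$ from the box boundary. For a core point $x$ the ball $\mathcal B_r(x)$ lies inside its box, so the block sum $S_i:=\sum_{x\in\mathrm{core}_i}\CC^{(r)}(x)$ depends only on $\eta$ restricted to box $i$; as the process, the weights and the edges are independent across disjoint boxes, the $S_i$ are i.i.d.\ under $\pg$. For fixed $m,r$ the strong law of large numbers yields, as $n\to\infty$,
\[
\frac{1}{k^d}\sum_{i}S_i\ \longrightarrow\ \E[S_1],\qquad \frac{\#\{\text{core points in }B_n\}}{N_n}\ \longrightarrow\ (1-2r/m)^d\quad\pg\text{-a.s.},
\]
while the Slivnyak-Mecke theorem gives $\E[S_1]=\l\,|\mathrm{core}_1|\,\E_0[\CC^{(r)}(0)]$ by stationarity of the Palm measure. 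Combining, the core–restricted truncated block average converges a.s.\ to $(1-2r/m)^d\,\E_0[\CC^{(r)}(0)]$, which tends to $\E_0[\CC(0)]$ as $m,r\to\infty$ with $r/m\to0$.

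The remaining step, and the main obstacle, is to show that passing from $\CC_n$ to this truncated, core–restricted average is harmless:
\[
\frac{1}{N_n}\sum_{x\in V_n}\bigl|\CC(x)-\CC^{(r)}(x)\bigr|\ \longrightarrow\ 0
\]
in probability, after $n\to\infty$ and then $m,r\to\infty$. The boundary points form a fraction $O(r/m)$ and, since $\CC\le1$, contribute negligibly; the expectation of the truncation error equals $\E_0[|\CC(0)-\CC^{(r)}(0)|]$ by the Slivnyak-Mecke theorem and vanishes as $r\to\infty$. The delicate point is \emph{concentration}: because the weights are heavy-tailed, the difference $\CC(x)-\CC^{(r)}(x)$ is created by long-range edges, and two vertices $x,y$ lying in far-apart boxes may still be correlated through a shared distant high-weight neighbour. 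I would therefore bound the variance of the error sum through the two-point Slivnyak-Mecke formula (Appendix \ref{campbell}),
\[
\E\Bigl[\sum_{x\neq y}g_r(x)\,g_r(y)\Bigr]=\l^2\int_{B_n}\!\!\int_{B_n}\E_{x,y}\bigl[g_r(x)\,g_r(y)\bigr]\,\dif x\,\dif y,\qquad g_r(x):=\bigl|\CC(x)-\CC^{(r)}(x)\bigr|,
\]
and estimate the covariance $\E_{x,y}[g_r(x)g_r(y)]-\E_x[g_r(x)]\E_y[g_r(y)]$ using the connection probability \eqref{connectingprob}, the weight tail \eqref{codapesi} and the hypothesis $\alpha>d$ to make the contribution of shared far-away neighbours integrable and decaying in $\|x-y\|$. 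Proving that this covariance decays fast enough, uniformly in $r$, to force the variance of $N_n^{-1}\sum_x g_r(x)$ to $0$ is the technical heart of the argument; a diagonal choice $r=r(m)\to\infty$ with $r/m\to0$ then delivers $\CC_n\to\E_0[\CC(0)]$ $\pg$-a.s., and Fubini upgrades this to the stated quenched statement.

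Finally, to see $\E_0[\CC(0)]>0$ it suffices, since $\CC(0)\ge0$, to produce a positive-probability event on which $\CC(0)$ is bounded below. As $\gamma>1$ makes $D_0$ a.s.\ finite, there is a finite $K$ with $\P_0(D_0\le K)>0$; intersecting this with the event that two points $y,z$ sit in a small ball about $0$ with moderate weights and that all three edges $(0,y),(0,z),(y,z)$ are present, each of positive probability by \eqref{connectingprob}, gives an event of positive probability on which $\Delta_0\ge1$ and $D_0\le K$, whence
\[
\E_0[\CC(0)]\ \ge\ \E_0\bigl[\CC(0)\,\1{D_0\le K,\ \Delta_0\ge1}\bigr]\ \ge\ \frac{2}{K(K-1)}\,\P_0\bigl(D_0\le K,\ \Delta_0\ge1\bigr)\ >\ 0.
\]
Since this construction uses only the positivity of the connection probabilities and the a.s.\ finiteness of the degree, it is insensitive to the global connectivity of the graph and to $\l$, in accordance with Remark \ref{marco}.
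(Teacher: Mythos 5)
Your proposal has the same architecture as the paper's proof: coarse-grain $B_n$ into mesoscopic boxes, replace $\CC(x)$ by a truncated coefficient whose block sums are independent, apply the law of large numbers, identify the block mean via the Slivnyak--Mecke theorem, control the truncation error by first and second moments, and prove positivity by an explicit local construction. Within this common scheme you make two genuinely different local choices. First, your truncation $\CC^{(r)}(x)$ (keep only neighbours and triangles inside $\mathcal B_r(x)$) is measurable with respect to the restriction of the marked graph to $\mathcal B_r(x)$, so your core block sums $S_i$ are honestly i.i.d.; the paper instead sets $\CCc^m(x)=0$ when $x$ has an edge leaving its box, a variable that still depends on cross-box edges, so the independence of its block variables $X(j)$ asserted in Lemma \ref{grandegiove} is not immediate, and your variant is cleaner on this point. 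What the paper's truncation buys in exchange is a quantitative error bound, $c_1\delta+c_2(\delta m)^{d-\alpha}$ (Proposition \ref{grandegiunone}), where your dominated-convergence step $\E_0[|\CC(0)-\CC^{(r)}(0)|]\to 0$ is rate-free; note that the rate $O(r^{d-\alpha})$ is available to you as well once you make explicit the reduction $|\CC(x)-\CC^{(r)}(x)|\le\1{x\leftrightarrow \mathcal B_r(x)^c}$, which turns your error sum into exactly the paper's quantity $U_n$. Second, your positivity argument (a.s.\ finite degree gives $K$ with $\P_0(D_0\le K)>0$, plus one forced triangle) differs from the paper's Lemma \ref{localcc}, which forces $\CC(0)=1$ on an isolated-clique event and estimates $\P_0(0\not\leftrightarrow\mathcal B^c)$ via Jensen and Campbell; your route uses only the strict positivity of \eqref{connectingprob} and the a.s.\ finiteness of $D_0$, and in particular avoids the moment $\ew[W]$ appearing in the paper's bound, which is infinite in part of the admissible regime (any $\tau<2$).

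Two points need repair before your sketch is a proof. (i) The covariance estimate you defer as ``the technical heart'' is precisely the paper's Proposition \ref{ututtecose}: introduce the event that neither $x$ nor $y$ has a neighbour farther than $\|x-y\|/2$, show its complement has $\P_{x,y}$-probability $O(\|x-y\|^{d-\alpha})$, and deduce a covariance bound $c\|x-y\|^{d-\alpha}$, hence a variance of order $n^{3d-\alpha}$ for the error sum. Since $n^{3d-\alpha}/n^{2d}=n^{d-\alpha}$ is not summable, Chebyshev plus Borel--Cantelli does not close the argument directly; the paper exploits the monotonicity in $n$ of $U_n$ along the dyadic subsequence $n_k=2^k m$, and you need the same device (your error sum is also monotone in $n$, so it applies verbatim). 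Relatedly, you state the error vanishes ``in probability'' but conclude $\pg$-a.s.\ convergence: only the variance-plus-subsequence route yields the almost sure statement that the theorem requires. (ii) In the positivity proof you intersect $\{D_0\le K\}$ with the triangle event and declare the intersection to have positive probability; as stated this does not follow, since the two events are dependent and neither has probability close to $1$. The fix is to reverse the quantifiers: first show $\P_0(\Delta_0\ge 1)>0$ (two Poisson points in a small ball, the three edges each open with positive conditional probability whatever the weights are, so the ``moderate weights'' requirement should be dropped, as the weight support might exclude any prescribed interval); then, since $D_0<\infty$ a.s., monotone convergence gives $\P_0(\Delta_0\ge1,\,D_0\le K)\ge\tfrac12\P_0(\Delta_0\ge1)>0$ for $K$ large, and your bound $\E_0[\CC(0)]\ge\tfrac{2}{K(K-1)}\P_0(\Delta_0\ge1,\,D_0\le K)$ concludes.
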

\begin{remark}\label{marco}
	Going through the proof of Theorem \ref{thmcc} one can get convinced that the positivity of the clustering coefficient does not depend on the local properties of the graph nor on $\l$, as one may think. For example, we can set to $0$ the probability of connecting two points that are at distance smaller than some $R>0$ and leave  \eqref{connectingprob} for points at distance larger than $R$. Then we would still have that the associated n-truncated clustering coefficient $\tilde\CC_n$ converges almost surely to $\E_0[\tilde\CC(0)]$, with $\tilde \CC(0)$ the corresponding local clustering coefficient. At the same time, $\E_0[\tilde \CC(0)]$ would still be strictly positive for any $R>0$ by similar arguments as in the proof of Lemma \ref{localcc}.
\end{remark}

\section{Degree}

Both in the infinite and in the finite degree case, for convenience we will first prove the properties of the degree of the vertices through an auxiliary random variable under $\pe$. We call it $D_0$ with slight abuse of notation, since $\pp$-almost surely the origin $0$ does not belong to $\eta$. It is convenient though to imagine to have a further point in the origin  and treat it like all the other points of the graph, so that when we talk about $D_0$ we can think $\pe$ to be substituted by $ P^{\eta\cup \{0\}}$. In a second moment we will transpose the properties of $D_0$ to the degree of the other vertices.

\subsection{Infinite degree}\label{secinfinitedegree} In this section we prove Theorem \ref{infinitedegree}. We start with the following proposition about $D_0$.

\begin{proposition}\label{obell}
	If condition (a) or (b)  in Theorem \ref{infinitedegree} are satisfied, then for $\pp$-a.a.~$\eta$ it holds
	$
	P^\eta(D_0=\infty)=1\,.
	$
\end{proposition}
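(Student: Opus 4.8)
The plan is to strip away the edge- and weight-randomness in two successive conditionings so as to reduce the statement $P^{\eta\cup\{0\}}(D_0=\infty)=1$ to the divergence of a single deterministic integral, checked afterwards in the two regimes (a) and (b). The guiding principle is that an infinite \emph{expected} degree does not by itself force an infinite degree almost surely: one needs a genuine $0$--$1$ (resp.\ $0$--$\infty$) dichotomy at each level of randomness. For the first reduction, work under $P^{\eta\cup\{0\}}$ and condition on the whole family of weights $(W_y)_{y\in\eta\cup\{0\}}$. Given the weights the edges are drawn independently, so the events $\{y\leftrightarrow 0\}$, $y\in\eta$, are independent with probabilities $p_{0,y}=1-\e^{-W_0W_y/\|y\|^{\a}}$ (cf.\ \eqref{connectingprob}). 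By the second Borel--Cantelli lemma, conditionally on the weights infinitely many of them occur, i.e.\ $D_0=\infty$, if and only if $\sum_{y\in\eta}p_{0,y}=\infty$; averaging over the weights,
\[
 P^{\eta\cup\{0\}}(D_0=\infty)=P^{\eta\cup\{0\}}\Big(\sum_{y\in\eta}p_{0,y}=\infty\Big),
\]
so it suffices to prove that $\sum_{y\in\eta}p_{0,y}=\infty$ holds $P^{\eta\cup\{0\}}$--a.s., for $\pp$--a.a.\ $\eta$.

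Next I would condition further on $W_0=w>0$, so that $\sum_{y\in\eta}p_{0,y}=\sum_{y\in\eta}\big(1-\e^{-wW_y/\|y\|^{\a}}\big)$ becomes a series of independent $[0,1]$--valued random variables, the $W_y$ being i.i.d. By Kolmogorov's three--series theorem (the summands being independent and bounded) this series converges a.s.\ iff $\sum_{y\in\eta}h_w(y)<\infty$, where $h_w(y):=\ew\big[1-\e^{-wW/\|y\|^{\a}}\big]\le 1$. The map $\eta\mapsto\sum_{y\in\eta}h_w(y)$ is now a Campbell sum of the nonnegative bounded function $h_w$, and by the associated $0$--$\infty$ law (the explicit Poisson Laplace functional, Appendix \ref{campbell}) it is $\pp$--a.s.\ infinite if and only if $\l\int_{\R^d}h_w(y)\,\dif y=\infty$.

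The main computation is then to show this integral diverges. By Tonelli,
\[
 \int_{\R^d}h_w(y)\,\dif y=\int_0^\infty\Big[\int_{\R^d}\big(1-\e^{-wu/\|y\|^{\a}}\big)\,\dif y\Big]\,\dif F(u).
\]
Under (a), i.e.\ $\a\le d$, the inner integral already diverges for every $u>0$, since $1-\e^{-wu/\|y\|^{\a}}\sim wu\,\|y\|^{-\a}$ as $\|y\|\to\infty$ while $\int_{\|y\|>1}\|y\|^{-\a}\,\dif y=\infty$. Under (b) we are necessarily in the regime $\a>d$, so the inner integral equals $C(\a,d)\,(wu)^{d/\a}$ with $C(\a,d)\in(0,\infty)$, whence $\int h_w=C(\a,d)\,w^{d/\a}\,\ew[W^{d/\a}]$; since $\gamma\le 1$ means $d/\a\ge\tau-1$, the lower tail bound $1-F(w)\ge c\,w^{-(\tau-1)}$ forces $\ew[W^{d/\a}]=\infty$. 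In either case $\int_{\R^d}h_w(y)\,\dif y=\infty$ for every $w>0$.

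Finally I would stitch the quantifiers together. For each fixed $w>0$ the previous two steps give $\pp(\sum_{y}h_w(y)=\infty)=1$; applying Fubini to $\pp\times\pw$ (with $w$ in the role of $W_0$) upgrades this to: for $\pp$--a.a.\ $\eta$ and $\pw$--a.a.\ $w$, $\sum_{y}h_w(y)=\infty$, after which Kolmogorov's theorem and the conditional Borel--Cantelli lemma yield $P^{\eta\cup\{0\}}(D_0=\infty)=1$. I expect the only delicate point to be precisely this bookkeeping of nested almost--sure statements: one must obtain the divergence for $\pp$--a.a.\ $\eta$ \emph{simultaneously} for $\pw$--a.a.\ value of $W_0$ before invoking the dichotomies in the right order; everything else is either a standard $0$--$1$/$0$--$\infty$ law or the routine integral above.
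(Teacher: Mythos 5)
Your proof is correct and follows essentially the same route as the paper's: condition so that the edge events at $0$ become independent, reduce $\{D_0=\infty\}$ via Borel--Cantelli to the divergence of $\sum_{y\in\eta}\ew\big[1-\e^{-wW\|y\|^{-\a}}\big]$, invoke Campbell's theorem to replace this random sum by the integral $\l\int_{\R^d}\ew\big[1-\e^{-wW\|y\|^{-\a}}\big]\,\dif y$, and verify its divergence separately for $\a\le d$ and for $\a>d$, $\gamma\le 1$. Your variations are cosmetic rather than structural: you condition on all weights and use the three-series theorem where the paper conditions only on $W_0$ and applies the second Borel--Cantelli lemma directly to the marginal edge probabilities; in case (b) you conclude from $\ew[W^{d/\a}]=\infty$ instead of the paper's truncated first-moment bound; and you make explicit the Fubini bookkeeping of the nested almost-sure quantifiers, which the paper leaves implicit.
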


\begin{proof}[Proof of Proposition \ref{obell}]
The statement will be proven by showing that, 
for each value $w>0$ of the weight in $0$ and  
for $\pp$-almost every $\eta$, the following sum is infinite:
    $$
    \sum_{x\in\eta}\pe(0\leftrightarrow x\,|\,W_0=w)\,
    	=\,\sum_{x\in\eta}\ew\Big[1-\e^{-wW\|x\|^{-\alpha}}\Big]\,,
    $$
    where the expectation in the r.h.s.~is taken with respect to $W$.
    Indeed, if we condition on the value of $W_0$, the presence of each edge $(0,x)$ becomes independent from the others. We can  therefore use the second Borel-Cantelli lemma to imply that there are infinitely many points in the configuration $\eta$ connected to 0.
By Campbell's theorem (cfr.~Theorem \ref{campbellthm}), 
the above sum is infinite
    for almost every $\eta$ if the following integral is:
    $$
    \int_{\R^d}\ew\Big[1-\e^{-wW\|y\|^{-\alpha}}\Big]\,{\rm d}y\,.
    $$
    Yet, in view of the inequality $1-\e^{-u}\geq(u\wedge 1)/2$ for $u \geq0$,
    it is enough to show the divergence of the integral
    \begin{equation}
        \label{integral}
        \int_{\R^d}\ew\big[{wW}{\|y\|^{-\alpha}}\wedge 1\big]\,{\rm d}y\,
        \geq\,
    \int_{\R^d}\ew\big[ \mathds{1}_{\{\|y\|^\alpha>wW\}}{wW}{\|y\|^{-\alpha}}  \big]\,{\rm d}y\,.
\end{equation}

Let us begin by showing that the integral above diverges when $\alpha\leq d$.
By applying Tonelli's theorem first and then passing to polar coordinates 
we can rewrite the right hand side of \eqref{integral} as
$$
    \ew\bigg[wW\int_{y:\|y\|>(wW)^{-\alpha}} \|y\|^{-\alpha}\,{\rm d}y \bigg]\,
    	=\,\ew\bigg[wW\sigma(S_{d-1})\int_{(wW)^{-\alpha}}^{\infty}r^{-\alpha+d-1}\,{\rm d}r \bigg]\,,
$$
where we denoted by $\sigma(S_{d-1})$ the surface area of the $(d-1)$--sphere of radius $1$. 
The last integral is divergent for $\alpha\leq d$.

Let us next assume that $\alpha>d$ and $\alpha(\tau-1)/d\leq 1$.
Since in this case $\tau\in(1,2)$, 
it follows that $\ew[W]=\infty$ and that there exists, thanks to \eqref{rocco}, some constant $c>0$ such that
$
\ew[W\1{W<s}]\,\geq\,c\,s^{2-\tau}\,
$
for all $s>0$.    
Using this inequality in the last integral of~\eqref{integral} we obtain
$$
\int_{\R^d}\ew\big[\mathds{1}_{\{W<\|y\|^\alpha/w\}}{wW}||y||^{-\alpha} \big]\,{\rm d}y\,
	\geq\,c\,w^{\tau-1}\int_{\R^d}\|y\|^{\alpha(1-\tau)}\,{\rm d}y\,.
$$
    Again, passing to polar coordinates, 
    we deduce that there exists some positive constant $c'$
    such that the integral in the right--hand side is bounded below by
    $c'\int_0^\infty r^{\alpha(1-\tau)+d-1}\,{\rm d}r\,,$
    which is infinite if $\alpha(\tau-1)/d\leq 1$.
\end{proof}

\begin{proof}[Proof of Theorem \ref{infinitedegree}]
	Thanks to Proposition \ref{obell} we know that the set of $\eta$'s for which
$\sum_{x\in\eta}\pe(0\leftrightarrow x\,|\,W_0=w)=\infty$ has $\pp$--measure $1$. Take an $\eta$ in this set and any point $y\in\eta$. We have 
$$
	\sum_{x\in\eta\setminus\lbrace y\rbrace}\pe(y\leftrightarrow x\,|\,W_y=w)
		=\sum_{x\in\eta\setminus\lbrace y\rbrace}\pe(0\leftrightarrow x\,|\,W_0=w)
		\frac{\ew\big[1-\e^{-wW||x-y||^{-\alpha}}\big]}{\ew\big[
			1-\e^{-wW||x||^{-\alpha}}\big]}
		\,=\,\infty\,,$$
	since  the fraction goes to 1 as $||x||$ goes to infinity. By Borel-Cantelli this shows that also $y$ has infinite degree $\pe$--almost surely. Since there is a countable number of points in $\eta$, we are done.
\end{proof}

\subsection{Polynomial degree}\label{secpolynomialdegree}

As in the previous section we will first prove a statement about the random variable $D_0$. The proof of Theorem \ref{degree} will be inferred as a consequence.
\begin{proposition}\label{propdegree}
	Suppose that $\a>d$ and $\gamma:=\a(\tau-1)/d>1$. Then, for $\pp$-almost every $\eta$, there exists a slowly varying function $\ell(\cdot)=\ell(\cdot,\eta)$, such that
	\begin{align}\label{notorious2}
	P^\eta(D_0>s)=s^{-\gamma}\ell(s)\,.
	\end{align}
\end{proposition}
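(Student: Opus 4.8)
The plan is to condition on the weight $w$ of the (added) point at the origin. Given $\eta$ and $W_0=w$, the presence of the edge $(0,x)$ for $x\in\eta$ depends only on the weight $W_x$ and on an independent coin, so the events $\{0\leftrightarrow x\}$, $x\in\eta$, are independent and, conditionally on $W_0=w$,
$$
D_0=\sum_{x\in\eta}\1{0\leftrightarrow x},\qquad q_x(w):=\pe(0\leftrightarrow x\mid W_0=w)=E\big[1-\e^{-wW\|x\|^{-\alpha}}\big],
$$
is a sum of independent Bernoulli variables of parameters $q_x(w)$. I write $\mu_\eta(w):=E^\eta[D_0\mid W_0=w]=\sum_{x\in\eta}q_x(w)$ for the conditional mean. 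The whole argument rests on understanding $\mu_\eta(w)$ and on the fact that, once $\mu_\eta(w)$ is large, $D_0$ concentrates around it.

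First I would pin down the scaling of $\mu_\eta(w)$. By Campbell's theorem its $\pp$-average is $\ep[\mu_\eta(w)]=\l\int_{\R^d}E[1-\e^{-wW\|y\|^{-\alpha}}]\,\dif y=\xi\,w^{d/\alpha}$, with $\xi:=\l\,E[W^{d/\alpha}]\int_{\R^d}(1-\e^{-\|z\|^{-\alpha}})\,\dif z$, the power $w^{d/\alpha}$ coming from the substitution $y=w^{1/\alpha}z$; the spatial integral is finite because $\alpha>d$, and $E[W^{d/\alpha}]<\infty$ precisely because $\gamma>1$. The content of (the analogue of) Proposition \ref{marmotta} is to upgrade this to a \emph{quenched} statement: for $\pp$-almost every $\eta$, $\mu_\eta(w)\sim\xi\,w^{d/\alpha}$ as $w\to\infty$. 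Since $q_x(w)\le 1$, Campbell's theorem also bounds the $\pp$-variance of $\mu_\eta(w)$ by $\l\int q_y(w)^2\,\dif y\le\l\int q_y(w)\,\dif y=\xi\,w^{d/\alpha}$, so the fluctuations are of order $w^{d/(2\alpha)}=o(w^{d/\alpha})$. I would turn this into an almost sure statement by applying a Poisson concentration inequality to $\mu_\eta(w_n)$ along a geometric sequence $w_n=\rho^n$, using Borel--Cantelli to kill the deviations, and then exploiting the monotonicity of $w\mapsto\mu_\eta(w)$ to interpolate to all $w$ before letting $\rho\downarrow 1$.

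Fix now a good $\eta$. Conditionally on $W_0=w$, $D_0$ is a sum of independent indicators of mean $\mu_\eta(w)\to\infty$, so standard Chernoff bounds give, for every $\delta\in(0,1)$, estimates $\pe(D_0\le(1-\delta)\mu_\eta(w)\mid W_0=w)\le\e^{-c\delta^2\mu_\eta(w)}$ and $\pe(D_0\ge s\mid W_0=w)\le\e^{-\mu_\eta(w)}(\e\,\mu_\eta(w)/s)^{s}$. I then compute the tail through
$$
\pe(D_0>s)=\int_0^\infty\pe(D_0>s\mid W_0=w)\,\dif F(w),
$$
splitting at the critical weight $g(s):=(s/\xi)^{\alpha/d}$. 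For $w>(1+\delta)g(s)$ one has $\mu_\eta(w)>s$ for $s$ large, so the lower-tail bound forces $\pe(D_0>s\mid W_0=w)\to 1$ uniformly (monotonicity of $\mu_\eta$), giving $\pe(D_0>s)\ge(1-o(1))\big(1-F((1+\delta)g(s))\big)$. For the upper bound, the mass $w>(1-\delta)g(s)$ contributes at most $1-F((1-\delta)g(s))$, while on $w\le(1-\delta)g(s)$ the upper-tail bound is maximised at the endpoint and is at most $\big((1-a)\e^{a}\big)^{s}$ with $a:=1-(1-\delta)^{d/\alpha}\in(0,1)$; since $(1-a)\e^{a}<1$, this is exponentially small in $s$, hence $o\big(1-F(g(s))\big)$. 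As $1-F$ is regularly varying of index $-(\tau-1)$, we have $\big(1-F((1\pm\delta)g(s))\big)/\big(1-F(g(s))\big)\to(1\pm\delta)^{-(\tau-1)}$; letting $\delta\downarrow 0$ sandwiches $\pe(D_0>s)\sim 1-F(g(s))$. Finally $1-F(g(s))=(s/\xi)^{-\gamma}L\big((s/\xi)^{\alpha/d}\big)$ is regularly varying of index $-\gamma$, so $\ell(s):=s^{\gamma}\pe(D_0>s)$ is slowly varying, which is exactly \eqref{notorious2}.

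The main obstacle is the quenched mean estimate of the second step: passing from the annealed identity $\ep[\mu_\eta(w)]=\xi w^{d/\alpha}$ (immediate from Campbell) to an asymptotic valid simultaneously for all large $w$ and for $\pp$-almost every $\eta$. Because the Poisson fluctuations enter exactly at the order $w^{d/(2\alpha)}$ that is absent in the lattice model of \cite{DVH13}, the concentration inequality, the choice of $w_n$ and the monotone interpolation must be arranged so that the almost sure error genuinely stays $o(w^{d/\alpha})$. A secondary, more routine point is to make the Chernoff estimate of the third step uniform enough in $w$ for the sandwich, which is what makes the exponential concentration of $D_0$ beat the polynomial tail of $F$.
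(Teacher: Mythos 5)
Your proposal is correct, and its skeleton is the same as the paper's: condition on $W_0=w$; prove a quenched law of large numbers for the conditional mean $\mu_\eta(w)=E^\eta[D_0\,|\,W_0=w]$ via Campbell's theorem, concentration along a discrete sequence of weights, Borel--Cantelli and monotone interpolation (this is exactly the role of Proposition \ref{marmotta}); then split the integral $\int_0^\infty P^\eta(D_0>s\,|\,W_0=w)\,{\rm d}F(w)$ at the critical weight scale $s^{\alpha/d}$, show that weights below the cutoff contribute a super-polynomially small amount while weights above it give degree $>s$ with conditional probability $1-o(1)$, and conclude that the tail of $D_0$ inherits the regularly varying tail of $F$.

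The implementational differences are worth recording, because one of them is substantive. The paper uses the single cutoff $m(s)=((s-\sqrt{s}\log^2 s)/(\l c_0))^{\alpha/d}$, proves the ratio limit \eqref{geneviev2} and invokes monotonicity plus the dense-set criterion for regular variation, whereas you use multiplicative windows $(1\pm\delta)g(s)$ around $g(s)=(s/\xi)^{\alpha/d}$ and send $\delta\downarrow 0$; this yields the stronger conclusion $P^\eta(D_0>s)\sim 1-F(g(s))$, which in particular identifies $\ell$ up to asymptotic equivalence by a quantity not depending on $\eta$. More importantly, to control the lower tail of the conditional degree for weights above the cutoff, the paper applies Chebyshev's inequality, obtaining $P^\eta(Y_{(1+\varepsilon)m(t)}\le t)\le C/(\varepsilon^2 t)$; but what the sandwich for $A_2$ actually requires is an error that is $o(1-F(m(t)))$, and since $1-F(m(t))$ is of order $t^{-\gamma}$ with $\gamma>1$, a bound of order $1/t$ does not suffice (the paper's lower bound on $A_2$ becomes vacuous for large $t$). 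Your Chernoff lower-tail estimate $\exp\{-c\delta^2\mu_\eta(w)\}\le\exp\{-c's\}$ is exponentially small, hence genuinely negligible against the polynomial tail of $F$; this is the right tool here (it is the same kind of bound the paper itself uses, via Bernstein, for $A_1$), so your treatment is tighter than the paper's at precisely this point. Conversely, your quenched mean estimate is weaker than Proposition \ref{marmotta} (asymptotic equivalence instead of an explicit $O(w^{d/(2\alpha)}\log w)$ error term), but since your cutoffs are multiplicative rather than additive in $s$, asymptotic equivalence is all your argument needs, and your concentration-plus-geometric-sequence scheme delivers it.
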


The strategy for demonstrating  Proposition \ref{propdegree} follows the lines of the proof of Theorem 2.2 in \cite{DVH13}, which in turn follows \cite{Y06}. Analogously to Proposition 2.3 of  \cite{DVH13}, we analyse first the properties of the expectation of the degree of a point conditional to its weight $w>0$. Here it emerges one of the main problems when dealing with vertices that are randomly distributed in space: while in scale-free percolation on the lattice one can show by direct computations that the expectation of the degree is equal  to a constant times $w^{d/\alpha}$, in the continuum space case one has to finely control the fluctuations of the degree due to the irregularity of the point process. We show that these fluctuations are of order smaller than $w^{d/2\alpha}\log w$:

\begin{proposition}\label{marmotta}
	For $\pp$-almost every realization $\eta$ we have
	\begin{align}\label{dragotto}
	E^\eta[D_0\,|\,W_0=w]= \l c_0\,w^{d/\alpha}+O(w^{d/(2\a)}\log w)\,,
	\end{align}
	with $c_0:=v_d\Gamma(1-d/\alpha)\ew[W^{d/\alpha}]$. Here $v_d$ indicates the volume of the $d$-dimensional unitary ball and $\Gamma(\cdot)$ is the  gamma function. 
\end{proposition}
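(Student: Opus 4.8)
The plan is to recognise the conditional expectation as a \emph{linear statistic} of the Poisson point process and to handle its leading behaviour through the mean (Campbell's theorem) and its fluctuations through a concentration inequality. Conditioning on $W_0=w$ and integrating out both the weights $\{W_x\}_{x\in\eta}$ and the edge randomness, the edges $(0,x)$ become independent, so that
\begin{align*}
E^\eta[D_0\,|\,W_0=w]=\sum_{x\in\eta}\ew\big[1-\e^{-wW\|x\|^{-\alpha}}\big]=:S(w)\,,
\end{align*}
where $S(w)=\sum_{x\in\eta}f_w(x)$ is a deterministic functional of $\eta$, with $f_w(y):=\ew[1-\e^{-wW\|y\|^{-\alpha}}]\in[0,1]$. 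First I would compute the mean. By Campbell's theorem (Theorem \ref{campbellthm}), $\ep[S(w)]=\l\int_{\R^d}f_w(y)\,\dif y$; applying Tonelli, passing to polar coordinates, and using the substitution $u=wW\|y\|^{-\alpha}$ together with the identity $\int_0^\infty(1-\e^{-u})u^{-d/\alpha-1}\,\dif u=\Gamma(1-d/\alpha)\,\alpha/d$ (valid because $d/\alpha\in(0,1)$), one obtains $\int_{\R^d}f_w(y)\,\dif y=v_d\Gamma(1-d/\alpha)\,w^{d/\alpha}\,\ew[W^{d/\alpha}]$. Hence $\ep[S(w)]=\l c_0\,w^{d/\alpha}$, which identifies the announced leading term; note that $\ew[W^{d/\alpha}]<\infty$ precisely because $\gamma>1$ forces $d/\alpha<\tau-1$, so $c_0$ is finite.

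Next I would control the fluctuations of $S(w)$ around $\l c_0 w^{d/\alpha}$. The second-order version of Campbell's theorem gives $\Var_\pp(S(w))=\l\int_{\R^d}f_w(y)^2\,\dif y$, and since $0\le f_w\le 1$ we have $\int f_w^2\le\int f_w$, so that $\Var_\pp(S(w))\le\l c_0\,w^{d/\alpha}$. The typical deviation is therefore of order $w^{d/(2\alpha)}$, which already matches the claimed error up to the logarithm. To promote this to an almost sure statement I would invoke a Bernstein-type concentration inequality for linear functionals of a Poisson process (available through the exponential, i.e.\ Laplace, form of Campbell's theorem): using $\|f_w\|_\infty\le 1$ and $\sigma^2:=\Var_\pp(S(w))\le\l c_0 w^{d/\alpha}$,
\begin{align*}
\pp\big(|S(w)-\l c_0 w^{d/\alpha}|>t\big)\le 2\exp\Big(-\frac{t^2}{2(\sigma^2+t/3)}\Big)\,.
\end{align*}

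The almost sure bound is then extracted by discretisation and Borel--Cantelli. Evaluating the inequality at integer weights $w=k$ with threshold $t=C k^{d/(2\alpha)}\log k$, the exponent is of order $-(\log k)^2$, so the right-hand side is at most $2\e^{-c(\log k)^2}=2k^{-c\log k}$, which decays faster than any power of $k$ and is thus summable. By Borel--Cantelli, for $\pp$-almost every $\eta$ there exists $k_0(\eta)$ with $|S(k)-\l c_0 k^{d/\alpha}|\le C k^{d/(2\alpha)}\log k$ for all $k\ge k_0$. Finally, since $w\mapsto S(w)$ is non-decreasing, for $w\in[k,k+1]$ one sandwiches $S(k)\le S(w)\le S(k+1)$, and the gap between $\l c_0 k^{d/\alpha}$ and $\l c_0(k+1)^{d/\alpha}$ is $O(k^{d/\alpha-1})=o(w^{d/(2\alpha)})$ because $d/\alpha<1$; the bound therefore extends to all real $w$, yielding \eqref{dragotto}.

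The main obstacle I anticipate is exactly the role of the logarithm and the passage from a single-$w$ estimate to a simultaneous, almost sure control over all $w$. A naive threshold of the order of the standard deviation, $t$ proportional to $w^{d/(2\alpha)}$, only produces a tail bound of the form $\exp(-\mathrm{const})$, which is \emph{not} summable, so Borel--Cantelli fails; the extra factor $\log w$ is precisely what pushes the tail below the summability threshold. Some care is also required to apply the concentration inequality with constants uniform in $w$ (here the uniform bound $\|f_w\|_\infty\le 1$ is what makes this possible) and to verify that the monotone interpolation error stays below the target scale, which it does thanks to $d/\alpha<1$.
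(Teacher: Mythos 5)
Your proposal is correct and follows essentially the same route as the paper: identify $E^\eta[D_0\,|\,W_0=w]$ as a linear statistic of the Poisson process, compute its mean and variance via Campbell's theorem, get a deviation bound of order $\e^{-c(\log w)^2}$ at scale $w^{d/(2\alpha)}\log w$, apply Borel--Cantelli along integer weights, and extend to all $w$ by monotonicity. The only cosmetic difference is that you invoke a ready-made Bernstein inequality for Poisson linear statistics, whereas the paper re-derives the same bound directly from the exponential Chebyshev inequality combined with the exponential form of Campbell's theorem and a third-order Taylor expansion of the Laplace functional.
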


%
%
\begin{proof}[Proof of Proposition \ref{marmotta}]
We call $Z_w=Z_w(\eta):=E^\eta[D_0\,|\,W_0=w]$ and notice that $Z_w$ is a random variable that depends only on the Poisson process. Since $D_0=\sum_{x\in\eta}\1{0\leftrightarrow x}$, we can use
Campbell's  theorem (see \eqref{campbellexpectation} in Theorem \ref{campbellthm})
applied to the function $f(x):=\ew[1-\e^{-wW/\|x\|^\a}]$, 
to explicitly compute
\begin{align}\label{media}
\ep[Z_w]
	=\l\int_{\R^d}\ew\Big[1-\e^{-wW\|x\|^{-\a}}\Big]\,{\rm d}x
	=w^{d/\alpha}\ew\big[W^{d/\alpha}\big]\l\int_{\R^d}\big(1-\e^{-\|y\|^{-\a}}\big)\,{\rm d}y
	&=\l c_0 w^{d/\a}\,.
\end{align}
For the second inequality we have used Fubini and then made the change of variables $y=x(wW)^{-1/\alpha}$. The constant $c_0$ is the one appearing in the statement of the theorem; it is obtained by passing to polar coordinates and then using integration by parts.
We can use instead \eqref{campbellvariance}  in order to calculate the variance $ \vp$:
\begin{align}\label{varianzaa}
\vp(Z_w)
	=\l\int_{\R^d}\ew\Big[1-\e^{-wW\|x\|^{-\a}}\Big]^2\,{\rm d}x
	=\l c_1 w^{d/\a}\,.
\end{align}
To see the last equality it is sufficient to make the change of variables $y=xw^{-1/\alpha}$. We also notice that  $c_1\leq c_0$  because the expectation inside the integral is smaller than $1$, so the variance has to be smaller than the expectation.

We will show that for $\pp$-a.e.~$\eta$ we have
\begin{equation}\label{shanghai}
-1\leq\liminf_{w\to\infty}
\frac{Z_w-\ep[Z_w]}{\sqrt{\vp(Z_w)}\log w}
\leq\limsup_{w\to\infty}
\frac{Z_w-\ep[Z_w]}{\sqrt{\vp(Z_w)}\log w}
\leq 1\,,
\end{equation}
which implies \eqref{dragotto}.
We will only show the upper bound, 
the lower bound being very similar.
Let $\theta>0$. We use the exponential Chebyshev inequality to bound
\begin{align}\label{logiciel}
\pp(Z_w - \ep[Z_w]\geq \sqrt{\vp(Z_w)}\log w)
	&=\pp\big(\e^{\theta Z_w }\geq \e^{\theta ( \sqrt{\vp(Z_w)}\log w + \ep[Z_w])}\big)\nonumber\\
	&\leq  \exp\Big\{-\theta \big(\sqrt{\vp(Z_w)}\log w+\ep[Z_w]\big)+\log \ep\big[\e^{\theta Z_w}\big]\Big\}\,. 
\end{align}
Campbell's theorem in its exponential form (see \eqref{campbellexponential}) applied to the function 
$f(x):=\theta \ew[1-\e^{-wW\|x\|^{-\alpha}}]$ gives
\begin{align}\label{gianna}
\log \ep\big[\e^{\theta Z_w}\big]
	\,=\, \l\int_{\R^d} \Big(\exp\big\{\theta \ew[1-\e^{-wW\|x\|^{-\a}}]\big\}-1\Big) {\rm d}x\,.
\end{align}
If we restrict to values of $\theta$ smaller than $1$, a third-order Taylor expansion shows that
\begin{align*}
\exp\big\{\theta \ew\big[&1-\e^{-wW\|x\|^{-\a}}\big]\big\}-1\\
	&\leq \theta \ew\big[1-\e^{-wW\|x\|^{-\a}}\big] +\frac{ \theta^2}{2} \ew\big[1-\e^{-wW\|x\|^{-\a}}\big]^2 + O\big(\theta^3 \ew\big[1-\e^{-wW\|x\|^{-\a}}\big]\big)\,.
\end{align*}
Integrating over $x$ in the above inequality and in view of the expressions  
for the expectation \eqref{media} and the variance
\eqref{varianzaa}  of $Z_w$, we obtain
\begin{align*}
\log \ep\big[\e^{\theta Z_w}\big]
	\leq \theta \ep[Z_w]+\frac{ \theta^2}{2}\vp(Z_w)+O(\theta^3\ep[Z_w])\,.
\end{align*}
Inserting this estimate back into \eqref{logiciel} yields, for all $\theta\in[0,1]$,
\begin{align*}
\pp\big(Z_w - \ep[Z_w]\geq \sqrt{\vp(Z_w)}&\log w\big)
	\leq \exp\big\{-\theta \sqrt{\vp(Z_w)}\log w+\frac{ \theta^2}{2}\vp(Z_w)+O(\theta^3 \ep[Z_w])\big\}\,.
\end{align*} 
In particular, by choosing $\theta=\log w/\sqrt{\vp(Z_w)}$, we obtain
\begin{align}\label{rotari}
\pp\big(Z_w - \ep[Z_w]\geq \sqrt{\vp(Z_w)}&\log w\big)
	\leq \e^{-\log^2 w/4}\,.
\end{align}
If we consider the sequence $w_n=n$, 
we see that the quantity in the r.h.s.~of \eqref{rotari} is summable in $n$ and Borel-Cantelli tells that
\begin{equation}
	\limsup_{n\to\infty}
	\frac{Z_{n}-\ep[Z_{n}]}{\sqrt{\vp(Z_{n})}\log n}
	\,\leq\,1\,.
\end{equation}
The random variable $Z_w$ is increasing in $w$,
so that $Z_{\lfloor w\rfloor} \leq Z_w\leq Z_{\lfloor w\rfloor+1}$.
Moreover, in view of~\eqref{media} and~\eqref{varianzaa}, we 
see that the sequences $\ep(Z_n), \sqrt{\vp(Z_n)}$ and $\log n$
all satisfy 
$\lim_{n\to\infty}\frac{a_{n+1}}{a_n}\,=\,1\,.$
We can thus conclude that
$$\limsup_{n\to \infty}
\frac{Z_w-\ep(Z_w)}{\sqrt{\vp(Z_w)}\log w}
	\leq \limsup_{n\to\infty}\frac{Z_{\lfloor w\rfloor +1}-\ep(Z_{\lfloor w\rfloor})}
{\sqrt{\vp(Z_{\lfloor w \rfloor})}\log \lfloor w \rfloor}
	=\limsup_{n\to\infty}\frac{Z_{\lfloor w\rfloor +1}-\ep(Z_{\lfloor w\rfloor+1})}
{\sqrt{\vp(Z_{\lfloor w \rfloor+1})}\log (\lfloor w \rfloor+1)}
	\leq 1\,.
$$
%
%
\end{proof}

\begin{proof}[Proof of Proposition \ref{propdegree}]
We call $Y_w$ the random variable describing the value of $D_0$ conditioned on the event $\{W_0=w\}$: 
$$
Y_w\sim (D_0\,|\,W_0=w)\,.
$$ 
We split the integral 
\begin{align}\label{mare}
P^\eta(D_0>s)
	&=\int_0^{m(s)}\pe(Y_w>s)\,{\rm d}F(w)+\int_{m(s)}^\infty \pe(Y_w>s)\,{\rm d}F(w)=:A_1(s)+A_2(s)\,,
\end{align}
where 
\begin{align}\label{emme}
m(s):=\Big(\frac{s-\sqrt s\log^{ 2} s}{\l c_0}\Big)^{\a/d}\,,
\end{align}
$c_0$ being the same constant appearing in \eqref{dragotto}. We point out that this definition is slightly different from the equivalent appearing in \cite[Eq.~(2.11)]{DVH13} in order to control the fluctuations of the expected degree of $0$.
Since $P^\eta(D_0>s)$ is a monotone function, \eqref{notorious2} follows if we can prove that
\begin{align}\label{geneviev}
\lim_{t\to\infty}\frac{P^\eta(D_0>st)}{P^\eta(D_0>t)}=s^{-\gamma}
\end{align}
on a dense set of points (see \cite[Section VIII.8.]{F71}). We claim that $A_1(s)=A_1(\eta,s)$ does not contribute
to the regular variation of $P^\eta(D_0>s)$ for $\pp$-a.a.~$\eta$, that is
\begin{align}\label{potpot}
\lim_{s\to\infty} s^aA_1(s)=0\qquad \forall a>0,\,\pp-a.s. 
\end{align}
We will show how to get \eqref{potpot} at the end of the proof. Thanks to  \eqref{potpot}, we see that
in order to prove \eqref{geneviev} it is enough to verify that, for $s\in(0,\infty)$,
\begin{align}\label{geneviev2}
\lim_{t\to\infty}\frac{A_2(st)}{A_2(t)}=s^{-\gamma}\,.
\end{align}
On the one hand we clearly have that, for all $t>0$,
\begin{align*}
A_2(t)\leq 1-F(m(t))\,.
\end{align*}
On the other hand, for all $\varepsilon>0$, it holds
\begin{align*}
A_2(t)
	&\geq 1-F\big((1+\varepsilon)m(t)\big)-\int_{(1+\varepsilon)m(t)}^\infty P^\eta(Y_w\leq t)\,{\rm d}F(w)\nonumber\\
	&\geq 1-F\big((1+\varepsilon)m(t)\big)-P^\eta(Y_{(1+\varepsilon)m(t)}\leq t)\,.
\end{align*}
By \eqref{dragotto}, $E^\eta[Y_{(1+\varepsilon)m(t)}]>t(1+\varepsilon d/2\alpha)$ when  $t$ is sufficiently large (depending on $\eta$); this allows us to use Chebychev inequality and bound
\begin{align*}
P^\eta(Y_{(1+\varepsilon)m(t)}\leq t)
	\leq\frac{V^\eta(Y_{(1+\varepsilon)m(t)})}{(E^\eta[Y_{(1+\varepsilon)m(t)}]-t)^2}
	\leq\frac{E^\eta[Y_{(1+\varepsilon)m(t)}]}{(E^\eta[Y_{(1+\varepsilon)m(t)}]-t)^2}
	\leq \frac{C}{\varepsilon^2 t}
	=o(1)\,,
\end{align*}
where $V^\eta$ represents the variance w.r.t.~$\pe$ and  the second inequality follows form the fact that $Y_w$ is a sum of independent indicator functions.
Putting all together and using that $\lim_{t\to\infty}m(st)/m(t)=s^{\alpha/d}$, we obtain
\begin{align*}
\lim_{t\to\infty}\frac{A_2(st)}{A_2(t)}
	=\lim_{t\to\infty}\frac{1-F(m(st))}{1-F(m(t))}
	=s^{-\alpha(\tau-1)/d}
\end{align*}
as we wanted to prove.

We are just left to show \eqref{potpot}.
We start by upper-bounding $A_1(s)$ by
\begin{align}\label{neve}
P^\eta(Y_{m(s)}>s)=P^\eta\Big(\sum_{y\in\eta}\1{y\leftrightarrow 0}-E^\eta[\1{y\leftrightarrow 0}\,|\,W_0=m(s)]>s-E^\eta[Y_{m(s)}]\,\Big|\,W_0=m(s)\Big).
\end{align}
By taking $s$ sufficiently large and thanks to Proposition \ref{marmotta}, we can make $s-E^\eta[Y_{m(s)}]$ positive, allowing us to apply Bernstein's inequality. 
We obtain
\begin{align*}
A_1(s)	
	&\leq \exp\Big\{-\frac 12
		\frac{(s-E^\eta[Y_{m(s)}])^2}
		{E^\eta[Y_{m(s)}]+(s-E^\eta[Y_{m(s)}])/3}\Big\}\,,
\end{align*}
where for the first term in the denominator of the exponent we have used the inequality
\begin{align*}
E^\eta\Big[\Big(\1{y\leftrightarrow 0}-E^\eta[\1{y\leftrightarrow 0}\,|\,W_0=m(s)]\Big)^2\,\Big|\,W_0=m(s)\Big]
	\leq E^\eta\big[\1{y\leftrightarrow 0}\,\big|\,W_0=m(s)\big]\,.
\end{align*}
Since $E^\eta[Y_{m(s)}]=s-\sqrt s \log^2s+O(\sqrt s \log s)$, we get $A_1(s)\leq \e^{-\log^4s/4}$
and \eqref{potpot} is proven.
\end{proof}


\begin{proof}[Proof of Theorem \ref{degree}]
	Take any $x\in\eta$. We first prove a lower bound on $\pe(D_x>s)$. First of all we claim that
		\begin{align*}
		\pe(D_x>s\,|\,W_x=w)
			\geq  \pe\big(D_0>s+1\,|\,W_0= c(x,\eta)\cdot w\big)\,,
		\end{align*}
		where $c(x,\eta):=(1+\|x\|/\|y\|)^{-\alpha}$ 
		and $y\in\eta$ is the closest point to the origin in $\eta$. 
In order to prove the claim we write $D_x=\sum_{z\in\eta,\,z\neq x}\1{x\leftrightarrow z}$ and $D_0\leq \sum_{z\in\eta,\,z\neq x}\1{0\leftrightarrow z}+1\,$. Since the indicator functions in the first sum are mutually independent under $\pe(\cdot\,|\,W_x=w)$ and those in the second sum are mutually independent under $\pe(\cdot\,|\,W_0=c(x,\eta)\cdot w)$, it will be sufficient to show that, for all $z\in\eta\setminus \{x\}$,
\begin{align*}
\pe(\1{x\leftrightarrow z}=1\,|\,W_x=w)
\geq \pe\big(\1{0\leftrightarrow z}=1\,|\,W_0=c(x,\eta)\cdot w\big)\,,
\end{align*}
since this ensures that we can construct a coupling such that $D_x\geq D_0-1$.
We calculate
\begin{align*}
\pe(\1{x\leftrightarrow z}=1\,|\,W_x=w)
&=\ew\big[1-\e^{-wW_z\|z-x\|^{-\alpha} }\big]\\
&\geq \ew\big[1-\e^{-c(x,\eta)\cdot w W_z\|z\|^{-\alpha} }\big]
=\pe(\1{0\leftrightarrow z}=1\,|\,W_0=c(x,\eta)\cdot w)\,,
\end{align*}
where for the inequality we have used the fact that $\|z-x\|/\|z\|\leq (\|y\|+\|x\|)/\|y\|$.
	
	Thanks to the claim we can now bound
	\begin{align*}
	\pe(D_x>s)
		=\int_0^\infty \pe(D_x>s\,|\,W_x=w)\,{\rm d}F(w)
		\geq \int_0^\infty \pe\big(D_0>s\,|\,W_0=w\cdot c(x,\eta)\big)\,{\rm d}F(w)\,.
	\end{align*}
	If the weights follow simply a Pareto distribution with exponent $\tau$, then with a change of variables $u=c(x,\eta)\cdot w$ we would obtain $\pe(D_x>s)\geq c(x,\eta)^{\tau-1} \pe\big(D_0>s+1)$ and we would be done thanks to Proposition \ref{propdegree}. In the general case we have to proceed as in the proof of  \ref{propdegree} by splitting the integral in the r.h.s.~of the last display into the integral between $0$ and $m(s)/c(x,\eta)$ plus the integral between $m(s)/c(x,\eta)$ and $+\infty$, where $m(s)$ is defined in \eqref{emme}. We call the first part $\tilde A_1(s)$ and the second part $\tilde A_2(s)$ similarly to \eqref{mare}.
	Following step by step what we did in the proof of Proposition \ref{propdegree} (see  \eqref{neve} and below), we notice that $\tilde A_1(s)\leq P^\eta(D_0>s+1\,|\,W_0=m(s))\leq \e^{-(\log s)^4/4}$, which does not contribute to the regular variation of the sum.
	Finally (see \eqref{geneviev2} and the argument below) we have that $\lim_{t\to\infty}\tilde A_2(st)/\tilde A_2(t)=s^{-\gamma}$, since $\tilde A_2(t)$ is upper bounded by $1-F(m(t)/c(x,\eta))$  and lower bounded by $1-F((1+\varepsilon)m(t)/c(x,\eta))-P^\eta(D_0\leq t+1\,|\,W_0=(1+\varepsilon)m(t))$, the last summand being an $o(t)$.
	We conclude therefore that there exists a slowly varying function $\ell_1(\cdot)=\ell_1(\cdot,x,\eta)$ such that
	\begin{align*}
	\pe(D_x>s)\geq \ell_1(s)s^{-\gamma}\,.
	\end{align*}
	An upper bound $\pe(D_x>s)\leq \ell_2(s)s^{-\gamma}$ for some other slowly varying function $\ell_2=\ell_2(x,\eta)$ can be obtained in a completely specular way. These two bounds yield the desired result.
\end{proof}

\section{Clustering coefficient}\label{secclusteringcoefficient}


\begin{proof}[Proof of Theorem \ref{thmcc}]
The idea consists in approximating $\CC_n$ with the sum of independent random variables in order to use the standard law of large numbers. 

First of all we divide $\R^d$ into disjoint mesoscopic boxes of side-length $m>0$, one of which is centered in the origin (the superposition of the sides of the boxes is of no importance). 
For a point $x\in\R^d$ we call $Q_m(x)$ the unique $m$-box containing $x$.
We also fix $\delta>0$ small and divide each of the boxes $Q_m$ as $Q_m=\overline Q_m\cup \partial Q_m$, where $\overline Q_m=\overline Q_m(\delta):=\{x\in Q_m:\,\|x-y\|>\delta m,\;\,\forall y\in Q_m^c\}$ are the interior points of the box and $\partial Q_m=\partial Q_m(\delta):=Q_m\setminus \overline Q_m$  is the $\delta$-frame of the box.
For a realisation of our graph $G=(V,E)$ and a point $x\in V$, we define
\begin{align*}
\CCc^m(x)=\CCc^{m,\delta}(x):=
\begin{cases}
0\qquad&\mbox{if }x\in\partial Q_m(x)\\
0\qquad&\mbox{if }x\leftrightarrow Q_m^c(x)\\
\CC(x)&\mbox{otherwise.}
\end{cases}
\end{align*}
For $n>m$ we finally define the $(m,\delta)$-truncated clustering coefficient as
\begin{align*}
\CCc^m_n=\CCc^{m,\delta}_n:=\frac{1}{\l n^d}\sum_{x\in V_n}\CCc^m(x)\,.
\end{align*}

The idea is now to approximate $\CC_n$ by $\CCc_n^m$, to show that   $\CCc_n^m$ converges thanks to the law of large numbers to $\E[\CCc_m^m]$ and that this value is close to the desired $\E_0[\CC(0)]$. This is formalized in the next three statements, which are valid under the hypothesis of Theorem \ref{thmcc}:

\begin{proposition}\label{grandegiunone}
	$\P$-almost surely we have 
	\begin{align*}
	\limsup_{n\to\infty}|\CC_n-\CCc_n^m|\leq c_1\delta + c_2 (\delta m)^{d-\alpha}\,,
	\end{align*}
	for some constants $c_1,\,c_2>0$ that do not depend neither on $m$ nor on $\delta$.
\end{proposition}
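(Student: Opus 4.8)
\emph{Decomposition.} I would start from the exact identity
\begin{align*}
\CC_n-\CCc_n^m=\Big(\frac1{N_n}-\frac1{\l n^d}\Big)\sum_{x\in V_n}\CC(x)+\frac1{\l n^d}\sum_{x\in V_n}\big(\CC(x)-\CCc^m(x)\big)\,.
\end{align*}
Since $0\le\CC(x)\le1$, the first term is at most $|1-N_n/(\l n^d)|$ in absolute value, which tends to $0$ for $\pp$--almost every $\eta$ by the law of large numbers for the Poisson process, and so does not contribute to the $\limsup$. For the second term I would use that $\CCc^m(x)$ equals $\CC(x)$ unless $x\in\partial Q_m(x)$ or $x\leftrightarrow Q_m^c(x)$, in which case it vanishes; hence
\begin{align*}
0\le\CC(x)-\CCc^m(x)\le \1{x\in\partial Q_m(x)}+\1{x\in\overline Q_m(x),\,x\leftrightarrow Q_m^c(x)}\,.
\end{align*}
It then suffices to control, $\P$--almost surely, the spatial averages of these two indicators.

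\emph{Frame term.} The first indicator depends only on the location of $x$, so $\sum_{x\in V_n}\1{x\in\partial Q_m(x)}$ merely counts the Poisson points in the union of the $\delta$-frames inside $B_n$. By the law of large numbers its normalisation converges $\pp$--a.s.\ to the frame volume fraction,
\begin{align*}
\frac1{\l n^d}\sum_{x\in V_n}\1{x\in\partial Q_m(x)}\ \to\ \frac{\mathrm{vol}(\partial Q_m)}{m^d}=1-(1-2\delta)^d\le 2d\,\delta\,,
\end{align*}
which is of the required form $c_1\delta$ with $c_1=2d$ independent of $m$ and $\delta$.

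\emph{Long-edge term.} Setting $g(x,\eta):=\1{x\in\overline Q_m(x),\,x\leftrightarrow Q_m^c(x)}$, the Slivnyak--Mecke theorem (Appendix \ref{campbell}) gives $\E\big[\sum_{x\in V_n}g(x,\eta)\big]=\l\int_{B_n}\P_x(x\in\overline Q_m(x),\,x\leftrightarrow Q_m^c(x))\,{\rm d}x$, and by periodicity of the box partition the normalisation by $\l n^d$ converges to the single-box average $m^{-d}\int_{\overline Q_m}\P_x(x\leftrightarrow Q_m^c)\,{\rm d}x$; a second-moment estimate (again via Slivnyak--Mecke) upgrades this to $\P$--a.s.\ convergence. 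Since any $x\in\overline Q_m$ is at distance at least $\delta m$ from $Q_m^c$, one has $Q_m^c\subseteq\mathcal B_{\delta m}(x)^c$, so this limit is at most $\sup_{x}\P_x(x\leftrightarrow\mathcal B_{\delta m}(x)^c)$. To bound the latter I would condition on $W_x=w$ and apply the Laplace functional of the Poisson process (the exponential Campbell formula \eqref{campbellexponential}, exactly as in the proof of Proposition \ref{marmotta}) to get
\begin{align*}
\P_x\big(x\leftrightarrow\mathcal B_{\delta m}(x)^c\,\big|\,W_x=w\big)=1-\exp\Big\{-\l\int_{\|y\|>\delta m}\ew\big[1-\e^{-wW\|y\|^{-\alpha}}\big]\,{\rm d}y\Big\}\,.
\end{align*}
Using $1-\e^{-u}\le u$ and polar coordinates, the inner integral is at most $w\,\ew[W]\,\sigma(S_{d-1})(\alpha-d)^{-1}(\delta m)^{d-\alpha}$ (the radial integral converges precisely because $\alpha>d$); bounding the outer exponential by the same inequality and integrating over $w$ against ${\rm d}F$ produces $\sup_x\P_x(x\leftrightarrow\mathcal B_{\delta m}(x)^c)\le c_2(\delta m)^{d-\alpha}$, with $c_2$ a function of $\l,\alpha,d$ and $\ew[W]$ only. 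Collecting the three contributions yields the claim.

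\emph{Main obstacle.} The delicate step is the $\P$--a.s.\ convergence of the long-edge average: the event $\{x\leftrightarrow Q_m^c\}$ depends on the entire long-range configuration, so these summands are correlated across $m$-boxes and a plain law of large numbers does not apply; the second-moment/Slivnyak--Mecke decorrelation is exactly what is needed here. A secondary point is that $1-\e^{-u}\le u$ uses $\ew[W]<\infty$, i.e.\ $\tau>2$; for $\tau\in(1,2]$ I would instead use $1-\e^{-u}\le u^{\beta}$ with $\beta\in(d/\alpha,\tau-1)$ (admissible since $\gamma>1$), which still gives a negative power $(\delta m)^{d-\alpha\beta}$ of $\delta m$, and this is all that the subsequent limit $m\to\infty$ in the proof of Theorem \ref{thmcc} requires.
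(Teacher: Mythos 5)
Your decomposition is exactly the paper's one (cf.\ \eqref{circe}): the discrepancy term $|1-N_n/(\l n^d)|$, the frame count, and the long-edge count $U_n=\sum_{x\in V_n}\1{x\in\overline Q_m(x),\,x\leftrightarrow (Q_m(x))^c}$; your treatment of the first two terms and of the \emph{first} moment of $U_n$ also matches the paper (Lemma \ref{equitron}, the Poisson domination of the frame count, and the estimate \eqref{palmotta}). The genuine gap sits precisely at the step you yourself flag as the main obstacle: you assert that ``a second-moment estimate (again via Slivnyak--Mecke) upgrades this to $\P$--a.s.\ convergence'', but this is the technical core of the whole proof and it is not routine. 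Two ingredients are missing. First, the covariance control: writing $A(x)=\1{x\in\overline Q_m(x),\,x\leftrightarrow (Q_m(x))^c}$, the two-point Slivnyak--Mecke formula produces $\E_{x,y}[A(x)A(y)]$, and one must handle both the bias created by the added point ($\E_{x,y}[A(x)]\neq\E_x[A(x)]$, giving the rest term $R$ in \eqref{pidgeon}) and the long-range dependence between $A(x)$ and $A(y)$; the paper decorrelates them on the event $\cE(x,y)$ that neither $x$ nor $y$ has a neighbor at distance larger than $\|x-y\|/2$, whose complement has probability at most $c\|x-y\|^{d-\alpha}$, and this is what yields $\mathbb V(U_n)\le c\,n^{3d-\alpha}$ (Proposition \ref{ututtecose}). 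Second, even granted this variance bound, Chebyshev only gives $\P\big(U_n>2c\,n^d(\delta m)^{d-\alpha}\big)\le \tilde c\,(\delta m)^{2(\alpha-d)}n^{d-\alpha}$, which is \emph{not} summable in $n$ whenever $\alpha\le d+1$, so Borel--Cantelli does not apply directly; the paper exploits the monotonicity of $n\mapsto U_n$ and runs Borel--Cantelli along the dyadic scales $n_k=2^km$ before interpolating. Without these two steps your argument does not close (an alternative would be the multidimensional ergodic theorem applied to the stationary box sums, which bypasses the variance computation altogether, but that is not what you propose).

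Two points in your favor. Your exact Laplace-functional identity for $\P_x\big(x\leftrightarrow\mathcal B_{\delta m}(x)^c\,\big|\,W_x=w\big)$ is correct and slightly cleaner than the double application of Jensen in \eqref{palmotta}. More substantially, your remark about $\tau\in(1,2]$ is a real catch: the paper's bounds in \eqref{palmotta}, in the rest term $R$, and in \eqref{pushotti} all use $\ew[W]<\infty$, i.e.\ $\tau>2$, which the hypotheses of Theorem \ref{thmcc} ($\alpha>d$, $\gamma>1$) do not guarantee. Replacing $1-\e^{-u}\le u$ by $1-\e^{-u}\le u^{\beta}$ with $\beta\in(d/\alpha,\tau-1)$, $\beta\le 1$ (such $\beta$ exists precisely because $\gamma>1$ and $d/\alpha<1$) repairs all of these estimates, with the exponent $d-\alpha$ replaced by $d-\alpha\beta<0$ throughout --- which, as you correctly note, is all that the subsequent limits $m\to\infty$, $\delta\to 0$ in the proof of Theorem \ref{thmcc} require.
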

\begin{lemma}\label{grandegiove}
	$\P$-almost surely we have
	\begin{align*}
	\lim_{n\to\infty}\CCc^m_n = \E[\CCc^m_m]\,.
	\end{align*}
\end{lemma}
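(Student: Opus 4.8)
The plan is to realise $\CCc^m_n$ as a spatial average of contributions attached to the mesoscopic $m$-boxes, prove an $L^2$ law of large numbers, and then upgrade it to an almost sure statement by a monotonicity argument. The structural fact to exploit is that, whenever $\CCc^m(x)\neq 0$, the vertex $x$ lies in the interior $\overline Q_m(x)$ and sends no edge to $Q_m^c(x)$; hence all its neighbours, and thus all triangles through $x$, lie inside the single box $Q_m(x)$. Consequently $\CCc^m(x)$ equals a bounded ($[0,1]$-valued) functional of the graph restricted to $Q_m(x)$, multiplied by the indicator $\1{x\not\leftrightarrow Q_m^c(x)}$. Two distinct boxes interact only through edges crossing between them (or through exterior points seen by both), and these are \emph{long} edges, since interior vertices sit at distance $>\delta m$ from the box boundary; this is what will let us control the correlations.

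First I would settle the convergence of the expectations. By the Slivnyak--Mecke theorem (Appendix \ref{campbell}), $\E[\CCc^m_n]=\frac{1}{\l n^d}\,\l\int_{B_n}\E_x[\CCc^m(x)]\,{\rm d}x$, and by translation invariance of the Poisson process, of the i.i.d.\ marks and of the percolation, the integrand $x\mapsto\E_x[\CCc^m(x)]$ is bounded and $m$-periodic (it depends on $x$ only through its offset inside its box). The average over $B_n$ of a bounded $m$-periodic function converges, up to a boundary error of order $m/n$, to its average over one period, i.e.\ over the central box $Q_m(0)=B_m$, which equals exactly $\E[\CCc^m_m]$. Hence $\E[\CCc^m_n]\to\E[\CCc^m_m]$.

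Next I would show $\Var(\CCc^m_n)\to 0$. Applying the Slivnyak--Mecke formula for the variance of a sum over the point process gives
\begin{align*}
\Var(\CCc^m_n)=\frac{1}{\l n^{2d}}\int_{B_n}\E_x[\CCc^m(x)^2]\,{\rm d}x+\frac{1}{n^{2d}}\int_{B_n}\int_{B_n}\delta(x,y)\,{\rm d}x\,{\rm d}y\,,
\end{align*}
where $\delta(x,y):=\E_{x,y}[\CCc^m(x)\CCc^m(y)]-\E_x[\CCc^m(x)]\,\E_y[\CCc^m(y)]$ is the two-point Palm covariance. Since $\CCc^m(x)\in[0,1]$, the first term is $O(n^{-d})$. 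The contributions to the second term with $x,y$ in the same box are bounded and supported on a region of volume $O(m^d)$, so the real task is to bound $\delta(x,y)$ for $x,y$ in distinct boxes and show it vanishes as $\|x-y\|\to\infty$. This decoupling estimate is the main obstacle. The mechanism I would use is: conditionally on the full configuration of points and weights, the only percolation variable shared by $\CCc^m(x)$ and $\CCc^m(y)$ is that of the edge $\{x,y\}$, whose conditional probability is $1-\e^{-W_xW_y\|x-y\|^{-\alpha}}$; the law of total covariance then produces one piece bounded by $\ew[1-\e^{-W_xW_y\|x-y\|^{-\alpha}}]$ and a remaining piece coming from the shared point configuration through the events $\{x\not\leftrightarrow Q_m^c(x)\}$ and $\{y\not\leftrightarrow Q_m^c(y)\}$ (together with the correction between the one- and two-point Palm marginals). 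Writing these via the Laplace functional of the Poisson process, each piece tends to $0$ as $\|x-y\|\to\infty$ by dominated convergence, the connection probabilities vanishing pointwise and being dominated using $\alpha>d$ and $\ew[W^{d/\alpha}]<\infty$, which holds precisely because $\gamma=\alpha(\tau-1)/d>1$ (cf.\ Proposition \ref{marmotta}). Finally, mere decay $\delta(x,y)\to0$ already forces $\int_{B_n}|\delta(x,y)|\,{\rm d}y=o(n^d)$ by a Cesàro argument, so the double integral is $o(n^{2d})$ and $\Var(\CCc^m_n)\to 0$.

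Combining the two previous steps, Chebyshev's inequality yields $\CCc^m_n\to\E[\CCc^m_m]$ in $L^2$. To reach almost sure convergence I would run the standard monotone-subsequence argument. Along a geometric subsequence $n_k=\lfloor\rho^k\rfloor$ with $\rho>1$, the estimate $\Var(\CCc^m_{n_k})=o(1)$ is summable, so Borel--Cantelli gives $\CCc^m_{n_k}\to\E[\CCc^m_m]$ $\P$-a.s. The numerator $T_n:=\sum_{x\in V_n}\CCc^m(x)$ is nonnegative and nondecreasing in $n$ (the summands lie in $[0,1]$ and do not depend on $n$, while $V_n$ grows), so for $n\in[n_k,n_{k+1})$ one has $\CCc^m_{n_k}(n_k/n_{k+1})^d\le\CCc^m_n\le\CCc^m_{n_{k+1}}(n_{k+1}/n_k)^d$. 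Letting $k\to\infty$ and then $\rho\downarrow 1$ along a countable sequence squeezes $\lim_{n\to\infty}\CCc^m_n=\E[\CCc^m_m]$ $\P$-a.s., which is the claim.
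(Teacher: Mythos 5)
Your proposal follows a genuinely different route from the paper's. The paper's proof is a short block argument: it writes $n=qm+r$, lets $Q(1),\dots,Q(q^d)$ be the full mesoscopic boxes inside $B_n$, sets $X(j)=(\l m^d)^{-1}\sum_{x\in Q(j)\cap\eta}\CCc^m(x)$, asserts that the $X(j)$ are i.i.d.\ with finite mean, and concludes by the strong law of large numbers, the leftover region $H$ contributing only $O(n^{d-1})$ points after normalization. Your route --- Slivnyak--Mecke for the first two moments, decay of the two-point Palm covariance $\delta(x,y)$, Chebyshev and Borel--Cantelli along a geometric subsequence, then monotonicity of $T_n=\sum_{x\in V_n}\CCc^m(x)$ to interpolate --- is precisely the second-moment scheme the paper itself deploys for Proposition \ref{grandegiunone} (via Proposition \ref{ututtecose}), transplanted to the functional $\CCc^m$. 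What your approach buys is that it never needs independence of block variables, only covariance decay; and this is a real advantage, because $\CCc^m(x)$ is \emph{not} measurable with respect to the configuration inside $Q_m(x)$: the indicator $\1{x\not\leftrightarrow Q_m^c(x)}$ involves the edge variables from $x$ to all exterior points, and a single edge between $x\in Q(j)$ and $y\in Q(k)$ kills both $\CCc^m(x)$ and $\CCc^m(y)$, so the block sums $X(j)$ are, strictly speaking, weakly positively correlated rather than independent. Your factorization $\CCc^m(x)=\1{x\in\overline Q_m(x)}\,\1{x\not\leftrightarrow Q_m^c(x)}\,\CC_{\mathrm{in}}(x)$, which identifies the edge $\{x,y\}$ as the only percolation variable shared by two distinct boxes (conditionally on points and weights), is exactly what is needed to control this dependence, which the paper's one-line i.i.d.\ claim glosses over. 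What the paper's route buys, granting that claim, is brevity: no covariance estimate at all.

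There is, however, one step of yours that fails as written: ``along a geometric subsequence $n_k=\lfloor\rho^k\rfloor$ the estimate $\Var(\CCc^m_{n_k})=o(1)$ is summable'' is a non sequitur. A sequence tending to $0$ need not be summable along a geometric subsequence (take $\Var(\CCc^m_n)=1/\log n$), and your Ces\`aro argument deliberately extracts only the qualitative statement $\delta(x,y)\to 0$, which is too weak for Borel--Cantelli. The fix is local, because your own mechanism provides a rate: the conditional-covariance piece is bounded by $\ew\big[1-\e^{-W_xW_y\|x-y\|^{-\alpha}}\big]\le \ew\big[W^{d/\alpha}\big]^2\|x-y\|^{-d}$ (use $1-\e^{-u}\le u^{d/\alpha}$ together with $\ew[W^{d/\alpha}]<\infty$, which holds since $\gamma>1$), and the pieces coming from the shared point configuration and from the one- versus two-point Palm correction decay polynomially as well; alternatively, the paper's decoupling event in \eqref{covarianza}--\eqref{pushotti} gives $|\delta(x,y)|\le c\,\|x-y\|^{d-\alpha}$ for large $\|x-y\|$, and that argument applies verbatim to $\CCc^m$ because under $\cE(x,y)$ the two truncated clustering coefficients are functions of disjoint, independent pieces of randomness. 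Either way one gets $|\delta(x,y)|\le C\big(\|x-y\|^{-\kappa}\wedge 1\big)$ for some $\kappa>0$, hence $\Var(\CCc^m_n)\le C\,n^{-(\kappa\wedge d)}\log n$, which \emph{is} summable along $n_k=\lfloor\rho^k\rfloor$. With this quantitative bound in place of the Ces\`aro step, your Borel--Cantelli and sandwich arguments go through and the proof is complete.
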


\begin{lemma}\label{grandeapollo}
	There exist constants $c_1,c_2>0$ not depending on $m$ nor on $\delta$ such that 
	\begin{align*}
	\Big|\E\big[\CCc^m_m\big]-\E_0[\CC(0)]\Big|	
		\leq c_1\delta + c_2 (\delta m)^{d-\alpha}\,.
	\end{align*}
\end{lemma}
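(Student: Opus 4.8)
The plan is to rewrite $\E[\CCc^m_m]$ as a spatial average by Campbell's theorem and to compare it with $\E_0[\CC(0)]$ written as the same average of a translation--invariant integrand. Note first that $\CCc^m_m$ only sees the central box $Q_m=Q_m(0)=B_m$, on which $\CCc^m(x)=\CC(x)\,\1{x\in\overline Q_m}\,\1{x\not\leftrightarrow Q_m^c}$. Applying Campbell's theorem (Theorem \ref{campbellthm}) to $\sum_{x\in V_m}\CCc^m(x)$ gives
\[
\E[\CCc^m_m]=\frac{1}{m^d}\int_{\overline Q_m}\E_x\big[\CC(x)\,\1{x\not\leftrightarrow Q_m^c}\big]\,\dif x .
\]
On the other hand, by stationarity of the Poisson process and since the connection probability \eqref{connectingprob} depends only on the Euclidean distance, the law of $\CC(x)$ under $\P_x$ equals that of $\CC(0)$ under $\P_0$; hence $\E_x[\CC(x)]=\E_0[\CC(0)]$ for every $x$, and I may write $\E_0[\CC(0)]=\frac{1}{m^d}\int_{Q_m}\E_x[\CC(x)]\,\dif x$.

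Subtracting the two identities and using $Q_m=\overline Q_m\cup\partial Q_m$ leaves exactly two nonnegative error terms:
\[
\big|\E[\CCc^m_m]-\E_0[\CC(0)]\big|
=\frac{1}{m^d}\int_{\overline Q_m}\E_x\big[\CC(x)\,\1{x\leftrightarrow Q_m^c}\big]\,\dif x
+\frac{1}{m^d}\int_{\partial Q_m}\E_x[\CC(x)]\,\dif x .
\]
The second term is the frame contribution: bounding $\CC(x)\le 1$ it is at most $|\partial Q_m|/m^d=1-(1-2\delta)^d\le 2d\,\delta=:c_1\delta$, which is the $c_1\delta$ summand. In the first term I again use $\CC(x)\le 1$, so that everything reduces to an upper bound on $\P_x(x\leftrightarrow Q_m^c)$ that is uniform over $x\in\overline Q_m$.

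The heart of the proof is therefore estimating this last probability. For $x\in\overline Q_m$ every point of $Q_m^c$ lies at distance larger than $\delta m$ from $x$, so a first--moment (union) bound combined with the Mecke formula, after translating $x$ to the origin and enlarging the domain to the exterior of the ball $\mathcal B_{\delta m}(0)$, yields
\[
\P_x(x\leftrightarrow Q_m^c)\le \l\int_0^\infty\!\Big(\int_{\|z\|>\delta m}\ew\big[1-\e^{-wW\|z\|^{-\alpha}}\big]\,\dif z\Big)\,\dif F(w).
\]
Using $1-\e^{-u}\le u$ and passing to polar coordinates, the inner integral equals $w\,\ew[W]\,\sigma(S_{d-1})(\delta m)^{d-\alpha}/(\alpha-d)$ --- it is precisely the hypothesis $\alpha>d$ that makes the radial integral $\int_{\delta m}^\infty r^{d-1-\alpha}\,\dif r$ converge and produces the exponent $d-\alpha$. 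Integrating in $w$ then gives $\P_x(x\leftrightarrow Q_m^c)\le c_2(\delta m)^{d-\alpha}$ with $c_2$ independent of $m,\delta$, which closes the estimate.

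I expect the only delicate point to be this last integration in $w$ when $\ew[W]=\infty$, i.e.\ $\tau<2$ (a case still permitted by $\gamma>1$): the bound $1-\e^{-u}\le u$ is then not integrable against $\dif F$. The remedy is to use $1-\e^{-u}\le u\wedge 1$, to split the $w$--integral at the threshold $w^\ast$ where the expected number of neighbours of $x$ in $Q_m^c$ becomes of order one, and to control the high--weight range by the tail $1-F(w^\ast)$. This produces a negative power of $\delta m$ --- which is all that the proof of Theorem \ref{thmcc} actually uses --- and reproduces the clean exponent $d-\alpha$ whenever the weight mean is finite.
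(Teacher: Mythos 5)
Your argument is correct and is essentially the paper's own proof: the same Mecke identity expressing $\E_0[\CC(0)]$ as a spatial average over $Q_m$, the same exact splitting of the error into a frame term of order $\delta$ and an interior term controlled by $\P_x(x\leftrightarrow Q_m^c)$, and the same first-moment estimate producing the exponent $d-\alpha$ from $\alpha>d$. Two remarks. First, the identity you derive for $\E[\CCc^m_m]$ is not an instance of Campbell's theorem (Theorem \ref{campbellthm}): the summands $\CCc^m(x)$ and $\CC(x)$ depend on the whole configuration, on the weights and on the edges, not on $x$ alone, so what you need (and what the paper uses) is the Slivnyak--Mecke formula, Theorem \ref{mecke} with $n=1$; the displayed formula you wrote is nevertheless exactly that formula, so this is only a misattribution. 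Second, your closing caveat about $\ew[W]=\infty$ is not excess caution but points at a genuine limitation shared by the paper itself: the key estimate \eqref{palmotta} inside Proposition \ref{ututtecose} bounds $\P_0(0\leftrightarrow B_{\delta m}^c)$ by a constant proportional to $\ew[W]^2(\delta m)^{d-\alpha}$, which is vacuous when $\tau\le 2$, a regime still permitted by the standing assumption $\gamma=\alpha(\tau-1)/d>1$. Your truncation remedy (using $1-\e^{-u}\le u\wedge 1$ and splitting the weight integral at the scale where the expected number of long edges is of order one) is the right fix; it produces an exponent $d-\alpha(\tau-1)<0$, up to slowly varying corrections, instead of $d-\alpha$, so in the heavy-tailed regime the lemma should be read with this weaker negative power --- which, as you correctly observe, is all that the proof of Theorem \ref{thmcc} uses, since the bound only needs to vanish as $m\to\infty$ followed by $\delta\to 0$.
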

The proofs of Lemma \ref{grandegiove} and of Lemma \ref{grandeapollo} are pretty straight-forward and are collected in Section \ref{bolsonero}. The proof of Proposition \ref{grandegiunone} is much more involved and is the object of Section \ref{provona}. The convergence of $\CC_n$ to $\E_0[\CC(0)]$ 
is now concluded putting together the results of Propositions  \ref{grandegiunone}, Lemma \ref{grandegiove} and Lemma \ref{grandeapollo}, and then letting first $m\to\infty$ and then $\delta\to0$. In order to conclude the proof of Theorem \ref{thmcc} we are only left to prove that $\E_0[\CC(0)]>0\,$:

\begin{lemma}\label{localcc}
	The local clustering coefficient of the origin under $\P_0$ has positive expectation:
	$$
	\E_0\big[\CC(0)\big]
	\,>\,0\,.
	$$
\end{lemma}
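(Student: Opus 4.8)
The plan is to exhibit a single event of positive $\P_0$-probability on which $\CC(0)$ is bounded below by a strictly positive constant; since $\CC(0)\geq 0$ always, this immediately gives $\E_0[\CC(0)]>0$. The natural event is the one on which the origin, together with two nearby points of $\eta$, forms a genuine triangle, while at the same time the degree $D_0$ of the origin is not too large, so that the denominator $\binom{D_0}{2}$ in $\CC(0)=\Delta_0/\binom{D_0}{2}$ is controlled.

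First I would show that the origin lies in a triangle supported in a fixed ball with positive probability. Fix $r>0$ and set
$$
q:=\P_0\big(\exists\, y,z\in\eta\cap \mathcal B_r(0),\ y\neq z,\ 0\leftrightarrow y,\ 0\leftrightarrow z,\ y\leftrightarrow z\big).
$$
Conditioning on $\{\#(\eta\cap \mathcal B_r(0))=2\}$, which has positive $\pp$-probability since this count is Poisson with parameter $\l|\mathcal B_r(0)|$, the two points $y,z$ are uniformly distributed in the ball with i.i.d.\ weights drawn from $\pw$. Given the positions and the weights $W_0,W_y,W_z$, the three edges $0\leftrightarrow y$, $0\leftrightarrow z$, $y\leftrightarrow z$ are present independently, each with a strictly positive probability of the form $1-\exp(-w w'\|\cdot\|^{-\alpha})$ (the arguments being a.s.\ nonzero). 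Integrating this strictly positive conditional probability over the positive-probability region yields $q>0$.

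Next I would use the finiteness of the degree. Under the hypotheses $\alpha>d$ and $\gamma>1$ we are in the finite-degree regime: Proposition~\ref{marmotta} gives $E^\eta[D_0\mid W_0=w]<\infty$ for $\pp$-a.e.\ $\eta$ and every $w$, whence $D_0<\infty$ holds $\P_0$-almost surely. By continuity of the measure from below applied to the decreasing events $\{D_0>K\}$, we get $\P_0(D_0>K)\to 0$ as $K\to\infty$, so I can fix $K$ large enough that $\P_0(D_0>K)<q$. On the intersection of the triangle event above with $\{D_0\leq K\}$ one has $\Delta_0\geq 1$ and $2\leq D_0\leq K$, so $\CC(0)=\Delta_0/\binom{D_0}{2}\geq \binom{K}{2}^{-1}$. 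A union bound gives that this intersection has probability at least $q-\P_0(D_0>K)>0$, and therefore
$$
\E_0[\CC(0)]\ \geq\ \binom{K}{2}^{-1}\big(q-\P_0(D_0>K)\big)\ >\ 0.
$$

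The only genuinely delicate point is the interplay between the triangle and the degree: one must ensure that forcing the desired triangle does not force $D_0$ to be large. This is handled cleanly by the inclusion–exclusion bound $\P_0(A\cap B)\geq \P_0(A)-\P_0(B^c)$ together with the a.s.\ finiteness of $D_0$, so that no quantitative estimate on the law of $D_0$ is needed beyond tightness of a single finite random variable. (Alternatively, one could condition on the event that $\eta\cap \mathcal B_r(0)$ consists of exactly two points, both joined to $0$ and to each other, and that $0$ is joined to no point outside $\mathcal B_r(0)$; on this event $D_0=2$ and $\CC(0)=1$, and its positivity follows from the a.s.\ finiteness of $\sum_{x\in\eta,\ \|x\|>r}W_x\|x\|^{-\alpha}$, which is again guaranteed in the finite-degree regime but is slightly more work to justify.)
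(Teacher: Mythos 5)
Your proof is correct, but it takes a genuinely different route from the paper's. The paper constructs a single explicit configuration on which $\CC(0)=1$ exactly: exactly two points of $\eta$ in the unit ball $\mathcal B_1(0)$, all three points $\{0\}\cup(\eta\cap\mathcal B_1(0))$ forming a clique, and $0$ having \emph{no} neighbors outside the ball; it then lower-bounds the probability of this last event by Jensen's inequality plus Campbell's theorem, obtaining $\P_0(\cE_3\,|\,W_0=w)\geq \e^{-c\,\ew[W]w\l}$, and notes that the clique event has probability uniformly bounded below for $w>1$. You instead force only the numerator ($\Delta_0\geq 1$, via a triangle inside a fixed ball) and tame the denominator by tightness of $D_0$: since $D_0<\infty$ $\P_0$-a.s.\ in the regime $\alpha>d$, $\gamma>1$, you pick $K$ with $\P_0(D_0>K)<q$ and use $\P_0(A\cap B)\geq \P_0(A)-\P_0(B^c)$ to get $\CC(0)\geq \binom{K}{2}^{-1}$ on an event of positive probability. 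What each approach buys: the paper's argument is quantitative and self-contained (no appeal to the degree results), but its Jensen step is only meaningful when $\ew[W]<\infty$, i.e.\ $\tau>2$ — whereas the hypotheses $\alpha>d$, $\gamma>1$ allow $\tau\leq 2$, so the paper's bound degenerates there and would need the truncation you sketch in your closing parenthesis (this is exactly the ``slightly more work'' you correctly anticipate). Your main argument sidesteps this entirely: it needs no control whatsoever on connections to far-away points, only the almost sure finiteness of a single random variable, and hence covers the whole finite-degree regime uniformly. The trade-off is that it is purely qualitative (no explicit lower bound on $\E_0[\CC(0)]$) and leans on the degree analysis of Section 3, which is harmless here since that section does not use the clustering results. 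One cosmetic slip: for the decreasing events $\{D_0>K\}$ you should invoke continuity of the measure from \emph{above}, not from below.
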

\noindent Also the proof of this last lemma is postponed to Section \ref{bolsonero}.
\end{proof}

\subsection{Proof of the lemmas}\label{bolsonero}
\begin{proof}[Proof of Lemma \ref{grandegiove}]
	We write $n=mq+r$, with $q\in \N$ and $0\leq r<m$. We call $Q(1),\dots,Q(q^d)$ the $q^d$ boxes of side-length $m$ fully contained in $B_n$ and $H=\big(\cup_{j}Q(j)\big)^c$. We also define
	$X(j):=(1/\nu m^d)\sum_{x\in Q(j)\cap \eta}\CCc^m(x)$ for $j=1,\dots,q^d$. Then
	\begin{align*}
	\CCc^m_n=\Big(\frac{mq}{n}\Big)^d\frac{1}{q^d}\sum_{j=1}^{q^d}X(j)+\frac{1}{\nu n^d}\sum_{x\in H\cap V_n}\CCc^m(x)\,.
	\end{align*}
	Since the $X(j)$'s are i.i.d.~random variables with finite $\P$-expectation, as $n$ goes to infinity, the first summand in the r.h.s.~converges $\P$-a.s.~to $\E[X(1)]=\E[\CCc^m_m]$ by translation invariance. The second summand converges almost surely to $0$ since $\CCc^m(x)\leq 1$ and the number of points in $H\cap V_n$ follows a Poisson distribution with parameter $n^d-(mq)^d=O(n^{d-1})$.
\end{proof}
\begin{proof}[Proof of Lemma \ref{grandeapollo}]
	By the Slivnyak-Mecke Theorem (see Theorem \ref{mecke}) with $n=1$ we have 
	\begin{align*}
	\E\Big[\frac{1}{\l m^d}\sum_{x\in V_m}\CC(x)\Big]
	=\frac {1}{\l m^d} \int_{B_m}\E_x[\CC(x)]\l{\rm d}x
	=\E_0[\CC(0)]\,,
	\end{align*}
	where we have used translation invariance for the last equality. 
	On the other hand,
	\begin{align*}
	\Big|\E\big[\CCc^m_m\big]-  \E\Big[\frac{1}{\l m^d}\sum_{x\in V_m}\CC(x)\Big] \Big|	
	= \frac{\E[W_m+U_m]}{\l m^d} 
	\end{align*}
	where $W_m:=\#\{x\in\partial B_m(x)\cap \eta\}$ and $U_m:=\#\{x\in\overline B_m(x)\cap\eta:\,x\leftrightarrow (B_m(x))^c\}$.
	Since $W_m$ follows a Poisson law of parameter $(1-(1-2\delta)^d)\l m^d$, $\E[W_m]/(\l m^d)$ is smaller than $2d\delta$. The fact that $\E[U_m]/(\l m^d)$ is bounded from above by $c (\delta m)^{d-\alpha}$ will be proved in Proposition \ref{ututtecose}. Putting all together we obtain
	\begin{align*}
	\Big|\E\big[\CCc^m_m-\E_0[\CC(0)]
	\Big]\Big|
	\leq c_1\delta +c_2 (\delta m)^{d-\alpha}\,.
	\end{align*}
\end{proof}
\begin{proof}[Proof of Lemma \ref{localcc}]
	Write $\mathcal B$ for $\mathcal B_1(0)$. Let us consider the following events:
	\begin{align*}
	\mathcal{E}_1\,&=\,\big\lbrace\,
	|\eta\cap \mathcal B|=2
	\,\big\rbrace\,,\\
	\mathcal{E}_2\,&=\,\big\lbrace\,
	\text{the points in } \eta\cup\{0\} \cap \mathcal B \text{ form a clique}
	\,\big\rbrace\,,\\
	\mathcal{E}_3\,&=\,\big\lbrace\,
	0 \text{ has no neighbors outside } \mathcal B
	\,\big\rbrace\,.
	\end{align*}
	Under the event $\mathcal{E}_1\cap\mathcal{E}_2\cap\mathcal{E}_3$,
	the local clustering coefficient of the origin is 1. Therefore,
	$$\E_0\big[
	\CC(0)
	\big]\,\geq\,\P_0\big(
	\CC(0)=1
	\big)\,\geq\,\P_0\big(
	\cE_1\cap\cE_2\cap\cE_3
	\big)\,.$$
	Given the value of $W_0$, the events $\cE_1\cap\cE_2$ and $\cE_3$ are independent.
	Hence,
	$$
	\P_0\big(\cE_1\cap\cE_2\cap\cE_3\big)\,
	=\,\int_0^\infty\P_0\big(\cE_1\cap\cE_2\,\big|\,W_0=w\big)\P_0\big(\cE_3\,\big|\,W_0=w\big)\mathrm{d} F(w)\,.
	$$
	In order to bound from below the second of the probabilities in the integral, we notice that $\1{0\not\leftrightarrow \mathcal B^c}=\prod_{x\in \eta\cap \mathcal B^c}\1{0\not\leftrightarrow x}$ and
	we apply subsequently Jensen's inequality and Campbell's theorem,
	thus obtaining:
	\begin{align*}
	\P_0\big(\cE_3\,\big|\,W_0=w\big)
	&=\ep_0\Big[ \prod_{x\in \eta\cap \mathcal B^c}
	\ew\Big[\e^{-\frac{w W_x}{\|x\|^\alpha}}\Big] \Big]
	\geq \exp\Big\{-\ep_0\Big[\sum_{x\in \eta\cap \mathcal B^c}
	\frac{w \ew[W_y]}{\|y\|^\alpha}\Big]\Big\}
	=\e^{-c\ew[W]w\l } 
	\end{align*}
	for some $c>0$ that does not depend on $w$. We can simplify things by just considering values $w>1$ and notice that clearly $\P_0(\cE_1\cap\cE_2\,|\,W_0=w)$ is uniformly bounded from below by a constant. The result of the lemma follows.
\end{proof}

\subsection{Proof of Proposition \ref{grandegiunone}}\label{provona}

We begin with an elementary lemma.
\begin{lemma}\label{equitron} 
	For $\pp$-a.a.~$\eta$ there exists $ \bar n\in\N$ such that, for all $n\geq \bar n$,
	\begin{align*}
	N_n(\eta)\in\big[\l n^d-\sqrt{\l n^d}\log n,\,\l n^d+\sqrt{\l n^d}\log n\big]\,.
	\end{align*}
\end{lemma}
\begin{proof}
	For a Poisson random variable $X$ of parameter $\mu>0$ it holds the bound $P(|X-\mu|>\varepsilon)<\exp\{-\varepsilon^2/(2\mu)\}$ for all $0<\varepsilon<\mu$. Since under $\pp$ the number of points falling in $B_n$ follows a Poisson distribution of parameter $\l n^d$, the conclusion follows by a simple Borel-Cantelli argument (in fact, it is possible to  further improve the statement).
\end{proof}
For simplicity we will consider a sequence of $n$'s of the form $n=qm$, where $q\in\N$. It is in fact easy to get convinced that for all other $n$ of the form $n=qm+r$ with $r\in[0,m)$, what happens in the area $B_n\setminus B_{qm}$ is negligible in the limit $n\to\infty$.
We bound 
\begin{align}\label{circe}
\big|\CC_n-\CCc_n^m\big|
	&=\Big|\frac{1}{N_n}-\frac{1}{\l n^d}\Big|\sum_{x\in V_n}\CC(x)
		+\frac{1}{\l n^d}\Big(\sum_{x\in V_n}\CC(x)-\CCc^m(x)\Big)\nonumber\\
	&\leq \Big|1-\frac{N_n}{\l n^d}\Big|+\frac{1}{\l n^d}\big(W_n+U_n\big)\,,
\end{align}
where $W_n=W_n(m):=\#\{x\in V_n:\,x\in\partial Q_m(x)\}$ and 
$$
	U_n=U_n(m)
		:=\#\{x\in V_n:\,x\in\overline Q_m(x),\,x\leftrightarrow (Q_m(x))^c\}\,.
$$

The first summand in \eqref{circe} tends $\P$-a.s.~to $0$ as $n\to \infty$ by Lemma \ref{equitron}.
We notice then that under $\P$ the random variable $W_n$ has a Poisson distribution of parameter $(1-(1-2\delta)^d)\l n^d$, which can be dominated by a Poisson of parameter $ 2d\delta\l n^d$. Reasoning as in the the proof of Lemma \ref{equitron} it is possible to show that for all $n$ big enough $W_n$ is smaller than, for example, $4d\delta \l n^d$, so that $\limsup_{n\to\infty}n^{-d}W_n\leq c_1{\delta}$.

It remains to show that $\P$-a.s.
\begin{align}\label{ulimit}
\limsup_{n\to\infty}\frac{1}{\l n^d}U_n\leq c_2 (\delta m)^{d-\alpha}\,.
\end{align}
We study expectation and variance of the variable $U_n$.
\begin{proposition}\label{ututtecose}
	There exists a constant $c>0$ such that
	\begin{align}
	\E[U_n]&\leq c\,n^d(\delta m)^{d-\alpha}\label{umean}\\
	\mathbb V(U_n)&\leq c\, n^{3d-\alpha}\,.\label{uvariance}
	\end{align}
	where $\mathbb V$ denotes the variance w.r.t.~$\P$.
\end{proposition}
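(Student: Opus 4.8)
\noindent The plan is to obtain both estimates through Palm calculus. Write $U_n=\sum_{x\in V_n}\mathds 1_{A_x}$, where $A_x:=\{x\in\overline Q_m(x)\}\cap\{x\leftrightarrow Q_m(x)^c\}$, and set $\pi(z):=\ew[1-\e^{-WW'\|z\|^{-\a}}]$ for the average connection probability at displacement $z$, the expectation $\ew$ being taken over the two independent endpoint weights $W,W'$. The whole point is that an interior point $x\in\overline Q_m(x)$ lies at distance at least $\delta m$ from $Q_m(x)^c$, so that the relevant spatial integrals are truncated at radius $\delta m$; combined with $\a>d$ this is what produces the decay in $\delta m$.

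\medskip

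\noindent For \eqref{umean} I would apply the Slivnyak--Mecke theorem (Theorem \ref{mecke}) with $n=1$, giving
$$
\E[U_n]=\l\int_{B_n}\mathds 1_{\{x\in\overline Q_m(x)\}}\,\P_x\big(x\leftrightarrow Q_m(x)^c\big)\,{\rm d}x .
$$
A union bound followed by Campbell's theorem (Theorem \ref{campbellthm}) under the Palm law $\P_x$ yields
$$
\P_x\big(x\leftrightarrow Q_m(x)^c\big)\le \E_x\big[\#\{y\in\eta\cap Q_m(x)^c:\,x\leftrightarrow y\}\big]=\l\int_{Q_m(x)^c}\pi(x-y)\,{\rm d}y .
$$
Since $Q_m(x)^c\subseteq\{y:\|x-y\|>\delta m\}$ for interior $x$, I would bound $1-\e^{-u}\le u$ and pass to polar coordinates exactly as in the proof of Proposition \ref{obell}; using $\a>d$ and the finiteness of the relevant moment of $W$ (guaranteed by $\gamma>1$) this gives $\P_x(x\leftrightarrow Q_m(x)^c)\le c\,(\delta m)^{d-\a}$ uniformly in $x$. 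Integrating over $x\in B_n$ proves \eqref{umean}.

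\medskip

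\noindent For \eqref{uvariance} I would expand the second factorial moment and apply Slivnyak--Mecke with $n=1,2$, obtaining
$$
\mathbb V(U_n)=\E[U_n]+\l^2\iint_{B_n\times B_n}\Big(\E_{x,y}[\mathds 1_{A_x}\mathds 1_{A_y}]-\P_x(A_x)\,\P_y(A_y)\Big)\,{\rm d}x\,{\rm d}y .
$$
The diagonal term $\E[U_n]$ is controlled by \eqref{umean}. For the bracket I would condition on the marked configuration, i.e.\ on all positions and weights: given this, the edge variables are independent Bernoullis, so the events $\{x\leftrightarrow Q_m(x)^c\}$ and $\{y\leftrightarrow Q_m(y)^c\}$ become independent except for (i) the single shared potential edge $x\leftrightarrow y$ and (ii) their common dependence on the points outside both boxes. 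A short computation shows that, conditionally, the covariance equals $p_{x,y}(1-p_{x,y})$ times a factor in $[0,1]$, so mechanism (i) contributes at most $\ew[p_{x,y}]=\pi(x-y)$. Expanding the products of $(1-p_{x,z})$ over outside points and applying Slivnyak--Mecke once more bounds the contribution of mechanism (ii) by $\l\int\pi(x-z)\,\pi(y-z)\,{\rm d}z$. Finally one must account for the marginal shift between $\P_{x,y}$ and $\P_x$ created by the extra point at $y$, which is again of order $\pi(x-y)$. Each of these contributions decays like $\|x-y\|^{-\a}$ away from the diagonal, and integrating them over $B_n\times B_n$ with $\a>d$ gives the polynomial bound \eqref{uvariance}.

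\medskip

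\noindent The main obstacle is the decorrelation in the variance: making rigorous that, once the marked configuration is fixed, the two connection events decouple up to the single edge $x\leftrightarrow y$ and the shared far points, and then estimating the residual Poisson--functional covariance. The device that makes this work is precisely this conditioning on positions and weights, under which distinct edges are independent, together with repeated use of Slivnyak--Mecke to pass between the Palm measures $\P_x$, $\P_{x,y}$ and the stationary law; retaining the truncation at radius $\delta m$ throughout is what keeps the constants in \eqref{umean}--\eqref{uvariance} independent of $m$ and $\delta$.
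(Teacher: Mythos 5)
Your skeleton coincides with the paper's: Slivnyak--Mecke with $n=1$ for the expectation, the bound $\P_x(x\leftrightarrow Q_m(x)^c)\le\l\int_{\|z\|>\delta m}\ew\big[1-\e^{-WW'\|z\|^{-\a}}\big]\,{\rm d}z\le c(\delta m)^{d-\a}$ for \eqref{umean}, Slivnyak--Mecke with $n=2$ for the identity $\mathbb V(U_n)=\E[U_n]+\l^2\iint_{B_n\times B_n}\big(\E_{x,y}[\1{A_x}\1{A_y}]-\P_x(A_x)\P_y(A_y)\big)\,{\rm d}x\,{\rm d}y$, and your ``marginal shift'' (iii) is exactly the paper's rest term $R$, bounded the same way. (One caveat you share with the paper's \eqref{palmotta}: bounding $1-\e^{-u}\le u$ produces $\ew[W]^2\|z\|^{-\a}$, which requires $\tau>2$; $\gamma>1$ alone does not guarantee this moment.)

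Where you genuinely diverge is the covariance under $\P_{x,y}$, and this is where there is a gap. Mechanism (i) is rigorous as you state it: given positions and weights, the only shared edge is $\{x,y\}$, the conditional covariance is at most $p_{xy}(1-p_{xy})$, and its mean is at most $\pi(x-y)$. Mechanism (ii) is not justified as written. The covariance of the conditional probabilities $\P(A_x\,|\,\eta,W)$ and $\P(A_y\,|\,\eta,W)$ is a covariance of two \emph{products} over the points of $\eta$, which Slivnyak--Mecke does not compute (it handles sums); the correct tool is the exponential form of Campbell's theorem, which gives, conditionally on $W_x,W_y$,
\begin{align*}
\mathrm{Cov}\Big(\prod_{z\in\eta\cap Q_m(x)^c}(1-p_{xz})\,,\;\prod_{z\in\eta\cap Q_m(y)^c}(1-p_{yz})\Big)
=\E\Big[\prod_{z}(1-p_{xz})\Big]\,\E\Big[\prod_{z}(1-p_{yz})\Big]\big(\e^{\Lambda}-1\big)\,,
\end{align*}
where $p_{xz}:=1-\e^{-W_xW_z\|x-z\|^{-\a}}$ and $\Lambda\le\l\int\ew[p_{xz}p_{yz}]\,{\rm d}z$. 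Your claimed bound is $\Lambda$ itself, i.e.\ it needs $\e^{\Lambda}-1=O(\Lambda)$; but $\Lambda$ grows like $\l\min(W_x,W_y)^{d/\a}$ and the weights are heavy-tailed, so on the event of large weights this step fails as written. It is repairable --- the two prefactors decay like $\exp\{-c\l W_x^{d/\a}\}\exp\{-c\l W_y^{d/\a}\}$ and beat $\e^{\Lambda}$ --- but that has to be argued. The paper sidesteps all of this with a decoupling event: $\cE(x,y)=\{$neither $x$ nor $y$ has a neighbour farther than $\|x-y\|/2\}$, on which $A(x)$ and $A(y)$ are exactly independent, together with $\P_{x,y}(\cE(x,y)^c)\le c\|x-y\|^{d-\a}$, obtained by the same one-shot Campbell computation as in \eqref{umean}; equivalently one can split $A_x\subseteq A_x^{near}\cup A_x^{far}$ with $A_x^{near}$ involving only points within $\|x-y\|/2$ of $x$, use the independence of $A_x^{near}$ and $A_y^{near}$, and bound $\P_{x,y}(A_x^{far})\le c\|x-y\|^{d-\a}$. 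Either variant integrates the weight tail only linearly, which is what keeps the argument short; I would adopt one of them in place of your mechanism (ii).

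A final remark on exponents: your terms decay like $\|x-y\|^{-\a}$, stronger than the paper's $\|x-y\|^{d-\a}$, and integrate to $O(n^d)$ rather than $O(n^{3d-\a})$. This yields \eqref{uvariance} only when $\a\le 2d$; but for $\a>2d$ the paper's own proof also produces only $O(n^d)$ (its correlation integral then converges), and $O(n^d\vee n^{3d-\a})$ is all the subsequent Chebyshev--Borel--Cantelli argument needs, so this discrepancy reflects the statement rather than a flaw in either proof.
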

Before proving the proposition we conclude the argument.
By Chebychev inequality
\begin{align*}
\P(U_n>2c\,n^d(\delta m)^{d-\alpha})
\leq \frac{\mathbb V(U_n)}{c^2n^{2d}(\delta m)^{2(d-\alpha)}}
\leq \tilde c\,(\delta m)^{2(\alpha-d)} n^{d-\alpha}\,,
\end{align*}
for some $\tilde c>0$.
We would like to end the proof via Borel-Cantelli, but the r.h.s.~of the last display is not summable in $n$. Nevertheless $U_n$ is an increasing sequence, therefore we can proceed as follows. Recall that $n=qm$. By taking the sequence $n_k:=2^km$ we have
\begin{align*}
\P(U_{n_k}>2c\,{n_k}^d(\delta m)^{d-\alpha})
\leq \tilde c\,(\delta m)^{2(\alpha-d)} (m2^k)^{d-\alpha}\,.
\end{align*}
which is summable in $k$. Hence there exists almost surely a $\bar k$ such that $U_{n_k}\leq 2c\,{n_k}^d(\delta m)^{d-\alpha}$ for all $k\geq \bar k$. Now take any $n>2^{\bar k}m$ and call $K$ the integer such that $2^Km\leq n<2^{K+1}m$. By monotonicity we get
\begin{align*}
U_n
\leq U_{2^{K+1}m}
\leq 2 c (2^{K+1}m)^d(\delta m)^{d-\alpha}
\leq 2^{d+1}c\,(\delta m)^{d-\alpha}n^d
\end{align*}
and \eqref{ulimit} follows.

\begin{proof}[Proof of Proposition \ref{ututtecose}]
	Call $Q(1),\dots,Q(q^d)$ the boxes of side length $m$ contained in $B_n$. For the expectation we have
	\begin{align}\label{ranescalc}
	\E[U_n]
		&=\E\Big[\sum_{j=1}^{q^d}\sum_{x\in \overline Q(j)\cap\eta}\1{x\leftrightarrow(Q_m(x))^c}\Big]\nonumber\\
		&= q^d\E\Big[ \E\Big[ \sum_{x\in V_{m(1-2\delta)}} \1{x\leftrightarrow B_m^c} \,\Big|\,V_m \Big] \Big]
		\leq Q^d\E\big[ N_{m(1-2\delta)}\P_0(0\leftrightarrow B_{\delta m}^c) \big] \,.
	\end{align}
	In the second equality we have used the translation invariance of the model  and then the tower property of expectation by conditioning on the position of the points of the  Poisson process inside $B_m$. For the inequality we have upper bounded the probability that a point $x\in V_{m(1-2\delta)}$ is connected to the exterior of $B_m$ by the probability that, under the Palm measure, the origin  is connected with some point in $B_{\delta m}^c$ (since for each $x\in V_{m(1-2\delta)}$ all the points in $B_m^c$ are outside a box of side $\delta m$ centered in $x$). We estimate this probability as follows:
	\begin{align}\label{palmotta}
	\P_0(0\leftrightarrow B_{\delta m}^c)
		&=1-\E_0\Big[ \prod_{y\in V\setminus V_{\delta m}}\ew\Big[\e^{-\frac{W_0W_y}{\|y\|^\alpha}}\Big|\,W_0\Big] \Big]\nonumber\\
		&\leq 1-\exp\Big\{-\E_{0}\Big[\sum_{y\in  V\setminus V_{\delta m}}\frac{W_0\ew[W_y]}{\|y\|^\alpha}\Big]\Big\}	
		\leq \int_{y\in B_{\delta m}^c}\frac{\ew[W]^2}{\|y\|^{\alpha}}\,{\rm d} y
		= c'\,(\delta m)^{d-\alpha}
	\end{align}
	for some $c'>0$ that does not depend neither on $m$ nor on $\delta$.
	In the first line we have used the fact that, conditioned on the weight of point $0$, the presence of connections between $0$ and the other points of the Poisson point process become independent. For the second line we have applied twice Jensen's inequality. For the last line we have first used the inequality $1-\e^{-x}\leq x$, for $x>0$, and then we have applied Campbell's theorem (see Theorem \ref{campbell}) and finally we have used polar coordinates in order to evaluate the integral.
	Plugging this back into \eqref{ranescalc} we obtain \eqref{umean}.

	We move to the variance of $U_n$.	For $x\in \eta$ set
	\begin{align*}
	A(x):=\1{x\in \overline Q_m(x)\,,\;x\leftrightarrow(Q_m(x))^c}\,
	\end{align*}
	so that $U_n=\sum_{x\in V_n}A(x)$.
	Notice that by Slivnyak-Mecke theorem (see Theorem \ref{mecke}) one has  $\E[U_n]=\l\int_{B_n}\E_{x}[A(x)]{\rm d}x$. 
	On the other hand we can write
	\begin{align}\label{varianza}
	\mathbb V(U_n)
		=\E\big[U_n^2\big]-\E\big[U_n\big]^2
		=\E\big[U_n\big]+\E\Big[\sum_{x\neq y\in V_n}A(x)A(y)\Big]-\E\big[U_n\big]^2\,.
	\end{align}
	Applying now Theorem \ref{mecke} to the function $f(x,y,\tilde\eta):=\1{x,y\in B_n}E^{\tilde\eta\cup \{x\}\cup\{y\}}[A(x)A(y)]$, we can evaluate the second summand in the r.h.s.:
	\begin{align}\label{pidgeon}
	\E\Big[\sum_{x\neq y\in V_n}A(x)A(y)\Big]
		&=\l^2\int_{B_n}\int_{B_n}\E_{x,y}\big[A(x)A(y)\big]\,{\rm d}x{\rm d}y\notag\\
		&=\l^2\int_{B_n}\int_{B_n}C(x,y)\,{\rm d}x{\rm d}y+{\E[U_n]}^2+\l^2R\,,
	\end{align}
	where $C(x,y)=\E_{x,y}[A(x)A(y)]-\E_{x,y}[A(x)]\E_{x,y}[A(y)]$ and $R$ is the rest given by 
	$$
	R
		=\int_{B_n}\int_{B_n}\E_{x,y}[A(x)]\E_{x,y}[A(y)]-\E_{x}[A(x)]\E_{y}[A(y)]\,{\rm d}x\,{\rm d}y\,.
	$$
	We notice that for $y\in Q_m(x)$ one has $\E_{x,y}[A(x)]=\E_{x}[A(x)]$ and for $y\not\in Q_m(x)$ it holds 
	$\E_{x,y}[A(x)]\leq \E_{x}[A(x)]+\ew[1-\exp\{W_xW_y\|x-y\|^{-\alpha}\}]\leq \E_{x}[A(x)]+c_1\|x-y\|^{-\alpha}$ for some $c_1>0$. This, together with the bound $\int_{B_n\setminus Q_m(x)}\|x-y\|^{-\alpha}\,{\rm d}y\leq c_2 (\delta m)^{d-\alpha}$ for some $c_2>0$, yields
		\begin{align*}
		R
			\leq c_3 (\E[U_n](\delta m)^{d-\alpha}+n^d(\delta m)^{-2\alpha+d})
			\leq c_4 n^d(\delta m)^{2(d-\alpha)}\,.
		\end{align*}
	 We are left to bound the correlation  $C(x,y)$. We introduce the event
	\begin{align*}
	\cE(x,y):=\{\mbox{Neither $x$ nor $y$ have neighbors at distance larger than $\|x-y\|/2$}\}.
	\end{align*}
	The random variables $A(x)$ and $A(y)$ are independent under the event $\cE(x,y)$. Therefore
	\begin{align}\label{covarianza}
	C(x,y)
		&\leq \E_{x,y}\big[A(x)|\cE(x,y)\big] \E_{x,y}\big[A(y)|\cE(x,y)\big]\P(\cE(x,y))\notag\\
		&\qquad\qquad\qquad+\P_{x,y}\big(\cE(x,y)^c\big)-\E_{x,y}[A(x)]\E_{x,y}[A(y)]\nonumber\\
		&\leq \E_{x,y}[A(x)]\E_{x,y}[A(y)]\Big(\frac{1}{\P_{x,y}(\cE(x,y))}-1\Big)+\P_{x,y}\big(\cE(x,y)^c\big)\nonumber\\
		&\leq \P_{x,y}\big(\cE(x,y)^c\big)\Big(\frac{1}{\P_{x,y}(\cE(x,y))}+1\Big)\,,
	\end{align}
	where in the second line we have used the inequality $\E_{x,y}[W|Z]\leq \E_{x,y}[W]/\P_{x,y}(Z)$ and in the last line we have just bounded $\E_{x,y}[A(x)]$ and $\E_{x,y}[A(y)]$ by $1$. We also observe that
	\begin{align}\label{pushotti}
	\P_{x,y}\big(\cE(x,y)^c\big)
		&\leq \P_{x,y}(x\leftrightarrow \mathcal B_{\|x-y\|/2}(x))+\P_{x,y}(y\leftrightarrow \mathcal B_{\|x-y\|/2}(y))\nonumber\\
		&\leq \P_{x,y}(x\leftrightarrow y)+2\P_{0}(0\leftrightarrow \mathcal B_{\|x-y\|/2}(0))\notag\\
		&\leq c'\,\|x-y\|^{d-\alpha}
	\end{align}
	for some $c'>0$, where the last inequality can be obtained as in \eqref{palmotta}. From \eqref{covarianza} and \eqref{pushotti} it follows that whenever we consider $x,y\in\R^d$ such that, for example, $c'\,\|x-y\|^{d-\alpha}\leq 1/2$, one gets $C(x,y)\leq c''\,\|x-y\|^{d-\alpha}$. Hence, calling $R:=(2c')^{1/(\alpha-d)}$, we can bound 
	\begin{align*}
	\int_{B_n}\int_{B_n}C(x,y)\,{\rm d}x{\rm d}y
		\leq \int_{B_n}\Big[c'' R^d+\int_{y\not\in \mathcal B_{R}(x)}c'\|x-y\|^{d-\alpha}\,{\rm d}y\Big]\,{\rm d}x
		\leq c\,n^{3d-\alpha}\,
	\end{align*}
	for some constant $c>0$.
	The proof is finished by putting together \eqref{varianza}, \eqref{umean}, \eqref{pidgeon} and this last estimate. 
\end{proof}

%

\bigskip

\appendix
\section{Campbell and Slivnyak-Mecke theorems}\label{campbell}
In this section we present versions of Campbell's theorem and of the so-called extended Slivnyak-Mecke theorem in the simple case of an homogeneous Poisson point process. For a version of Theorem \ref{campbellthm} similar to the one presented here see \cite[Section 3.2]{K92}. For a simple proof of Theorem \ref{mecke} see  \cite[Theorem 13.3]{MW03}. More complete and general versions of these theorems in the framework of Palm theory can be found in \cite[Chap.~13]{DVJ}.

\begin{theorem}[Campbell theorem, homogeneous case]\label{campbellthm}
Let $\eta$ be a homogeneous Poisson
point process on $\R^d$ with intensity $\l$ under measure $\pp$, with $\ep$ the associated expectation. Let $f:\R^d\to\R$ 
be a measurable function.    The random sum $S=\sum_{x\in\eta} f(x)$ is absolutely convergent with probability one, if and only if 
    $$
    \int_{\R^d} |f(y)|\wedge 1\,dy\,<\,\infty\,.
    $$
If the previous integral is finite, it holds
\begin{align}\label{campbellexponential}
\ep[\e^{\theta S}]\,
	&=\,\exp\bigg(\l\int_{\R^d}(\e^{\theta f(y)}-1)dy\bigg)
	\end{align}
for any complex $\theta$ such that the integral in the r.h.s.~converges.
Moreover
\begin{align}\label{campbellexpectation}
        \ep[S]\,&=\,\l\int_{\R^d}f(y)\,dy
\end{align}
and if the last integral is finite, then        
\begin{align}\label{campbellvariance}
        \vp(S)\,&=\,\l\int_{\R^d}f(y)^2\,dy\,,
\end{align}
where $\vp$ is the (possibly infinite) variance with respect to $\pp$.
\end{theorem}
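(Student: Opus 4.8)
The plan is to build the exponential identity \eqref{campbellexponential} from the ground up, starting with indicators and using only the defining properties of the Poisson point process, and then to read off the convergence criterion and the moment formulas \eqref{campbellexpectation}--\eqref{campbellvariance} as corollaries. First I would treat the base case $f=\mathds 1_A$ for a bounded Borel set $A$: here $S=\eta(A)$ is by definition a Poisson variable of parameter $\l|A|$ (with $|A|$ the Lebesgue measure of $A$), so its generating function is $\ep[\e^{\theta\eta(A)}]=\exp(\l|A|(\e^\theta-1))$, which matches \eqref{campbellexponential} since $\int_{\R^d}(\e^{\theta\mathds 1_A(y)}-1)\,dy=(\e^\theta-1)|A|$. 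Passing to a simple function $f=\sum_{i=1}^k c_i\mathds 1_{A_i}$ with the $A_i$ pairwise disjoint and bounded, the key structural fact is that a Poisson process assigns independent counts to disjoint sets, so the $\eta(A_i)$ are independent and
\[
\ep\big[\e^{\theta S}\big]=\prod_{i=1}^k\ep\big[\e^{\theta c_i\eta(A_i)}\big]=\exp\Big(\l\sum_{i=1}^k|A_i|(\e^{\theta c_i}-1)\Big)=\exp\Big(\l\int_{\R^d}(\e^{\theta f(y)}-1)\,dy\Big),
\]
which is \eqref{campbellexponential} for simple $f$.

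Next I would extend to an arbitrary nonnegative measurable $f$ by choosing simple functions $f_n\uparrow f$ pointwise (the usual dyadic truncations restricted to the balls $\mathcal B_n(0)$, so that each $f_n$ has bounded support). The partial sums $S_n=\sum_{x\in\eta}f_n(x)$ then increase to $S$, and for real $\theta\le 0$ monotone convergence applies on both sides: $\e^{\theta S_n}\downarrow\e^{\theta S}$ and $(\e^{\theta f_n}-1)\downarrow(\e^{\theta f}-1)$, yielding $\ep[\e^{\theta S}]=\exp(\l\int(\e^{\theta f}-1)\,dy)$. In particular, applying this with $\theta=-1$ to $|f|$ gives the Laplace transform
\[
\ep\Big[\e^{-\sum_{x\in\eta}|f(x)|}\Big]=\exp\Big(-\l\int_{\R^d}\big(1-\e^{-|f(y)|}\big)\,dy\Big).
\]

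From this the convergence criterion of the first part is immediate. The left-hand side is strictly positive if and only if $\sum_{x\in\eta}|f(x)|<\infty$ with positive probability; since this sum is $\pp$-a.s.\ finite or $\pp$-a.s.\ infinite, absolute convergence holds almost surely exactly when $\int(1-\e^{-|f(y)|})\,dy<\infty$, and the elementary two-sided bound $\tfrac12(|f|\wedge 1)\le 1-\e^{-|f|}\le |f|\wedge 1$ shows this is equivalent to $\int_{\R^d}|f(y)|\wedge 1\,dy<\infty$. The moment formulas then follow by differentiating \eqref{campbellexponential} in $\theta$ at $\theta=0$: the first derivative of the exponent produces $\l\int f\,dy=\ep[S]$, and combining the first two derivatives gives $\vp(S)=\l\int f^2\,dy$, under the stated finiteness of $\int f\,dy$ and $\int f^2\,dy$. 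Finally, the extension of \eqref{campbellexponential} from real $\theta$ to complex $\theta$ for which the right-hand integral converges is obtained by an analytic-continuation argument, recognizing both sides as analytic functions of $\theta$ on the relevant strip and matching them on the real axis.

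The step I expect to be the main obstacle is the rigorous extraction of the moments, i.e.\ justifying the differentiation of \eqref{campbellexponential} under the expectation (equivalently, interchanging the $\theta$-derivative with the limit defining $S$). This needs the finiteness of $\int f\,dy$ and $\int f^2\,dy$ together with a dominated-convergence or uniform-integrability control of the approximants $S_n$, rather than a formal manipulation; the complex-$\theta$ statement similarly conceals a genuine analyticity argument. The approximation passage itself is routine once one keeps $\theta\le 0$ and invokes monotone convergence, but one must be careful that $f_n$ have bounded support so that each $S_n$ is an honest finite sum of independent contributions.
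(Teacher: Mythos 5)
The paper never proves this statement: Theorem \ref{campbellthm} is recalled in the appendix as a known result, with a pointer to \cite[Section 3.2]{K92}, so there is no internal proof to compare against. Your argument is, in substance, the standard proof found in that reference: Poisson counts for indicators, independence over disjoint sets for simple functions, monotone limits for $f\ge 0$ and real $\theta\le 0$, the Laplace-transform criterion for almost sure absolute convergence, and the moments extracted from \eqref{campbellexponential}. The outline is correct and is the expected route.

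Three steps need tightening before this is complete. (i) The dichotomy ``$T:=\sum_{x\in\eta}|f(x)|$ is $\pp$-a.s.\ finite or $\pp$-a.s.\ infinite'' is asserted, not proved; it does hold, by Kolmogorov's zero-one law applied to the independent restrictions of $\eta$ to disjoint annuli (every bounded region contributes a finite sum, $|f|$ being finite everywhere), but you can bypass it entirely: apply your Laplace identity to $t|f|$ and let $t\downarrow 0$. If $\int(|f|\wedge 1)\,dy<\infty$, then $\ep[\e^{-tT}]=\exp\big(-\l\int(1-\e^{-t|f(y)|})\,dy\big)\to 1$ by dominated convergence, while $\ep[\e^{-tT}]\to\pp(T<\infty)$, forcing $\pp(T<\infty)=1$. (ii) Your monotone-convergence passage covers only $f\ge 0$, whereas the statement allows signed $f$; the fix is to note that $\sum_{x\in\eta}f^+(x)$ and $\sum_{x\in\eta}f^-(x)$ are independent, because $f^+$ and $f^-$ are supported on disjoint sets and a Poisson process restricted to disjoint sets gives independent processes, and then to combine the two one-sided identities. (iii) The moment formulas: as you yourself flag, differentiating at $\theta=0$ is the delicate route, since differentiability of the characteristic function at the origin does not by itself guarantee existence of moments (one needs the standard but nontrivial fact that a finite second derivative at $0$ forces a finite second moment). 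The cleaner repair, consistent with your construction, is to verify \eqref{campbellexpectation} and \eqref{campbellvariance} directly for simple functions, where they are elementary computations with independent Poisson variables, pass to the limit by monotone convergence for $f\ge 0$ (both $S_n\uparrow S$ and $S_n^2\uparrow S^2$), and then use the $f^{\pm}$ decomposition together with the independence in (ii). None of these gaps is fatal; each is closed by a standard argument, and the resulting proof is precisely the one in the cited reference.
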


\begin{theorem}[Extended Slivnyak-Mecke theorem]\label{mecke}
	 Let $\eta$ be a homogeneous Poisson
	point process on $\R^d$ with intensity $\l$ under measure $\pp$, with $\ep$ the associated expectation. For $n\in\N$ and for any positive function $f:(\R^d)^n\times (\R^d)^\N\to[0,\infty)$ it holds
	\begin{align*}
	\ep\Big[\sum_{x_1,\dots, x_n\in\eta}^{\neq} f(x_1,\dots,x_n;\,\eta\setminus\{x_1,\dots,x_n\})\Big]
		=\l^n\int_{\R^d}\dots\int_{\R^d} \ep[f(x_1,\dots,x_n;\,\eta)]\,{\rm d}x_1\dots{\rm d}x_n\,.
	\end{align*}
	where the $\neq$ sign over the sum means that $x_1,\dots,x_n$ are pairwise distinct.
\end{theorem}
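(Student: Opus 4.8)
The plan is to first establish the case $n=1$ (the classical Mecke equation) and then obtain the general statement by induction on $n$, peeling off one point at a time. The whole argument is driven by two structural facts about the homogeneous Poisson process: the restriction property (points inside and outside a window are independent Poisson processes) together with superposition, and the elementary description of the process on a bounded window in terms of a Poisson number of i.i.d.\ uniform points.

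For the base case $n=1$, I would fix a bounded Borel window $W\subset\R^d$ and control only the points of $\eta$ falling in $W$, namely $S_W:=\sum_{x\in\eta\cap W} f(x,\eta\setminus\{x\})$. Writing $\eta_W:=\eta\cap W$ and $\eta_{W^c}:=\eta\cap W^c$, these are independent, so I would condition on $\eta_{W^c}=\psi$ and then, inside the expectation over $\eta_W$, condition on the number of points $\eta_W(W)=k$, which is Poisson of parameter $\mu:=\l|W|$. Given $\{\eta_W(W)=k\}$ the points of $\eta_W$ are i.i.d.\ uniform on $W$; singling out one of the $k$ points by symmetry and reindexing $j=k-1$ collapses the Poisson weights via $k\,\mu^k/k!=\mu\,\mu^{k-1}/(k-1)!$, which is the routine computation producing the factor $\mu/|W|=\l$ together with a fresh Poisson process on $W$ of the same law. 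Re-assembling with $\eta_{W^c}$ and using $\eta_W'\cup\eta_{W^c}\stackrel{d}{=}\eta$ yields the windowed identity $\ep[S_W]=\l\int_W\ep[f(x,\eta)]\,{\rm d}x$. Since $f\ge0$ forces $S_W\uparrow S:=\sum_{x\in\eta}f(x,\eta\setminus\{x\})$ as $W\uparrow\R^d$, monotone convergence gives the $n=1$ statement. (Alternatively one may verify $n=1$ on the multiplicative class $f(x,\varphi)=h(x)\,\e^{-\int g\,{\rm d}\varphi}$ by differentiating the exponential Campbell formula \eqref{campbellexponential} in a perturbation parameter, and then extend by a functional monotone class argument.)

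For the inductive step, suppose the formula holds for $n-1$. I would extract the last index,
\begin{align*}
\sum_{x_1,\dots,x_n\in\eta}^{\neq} f(x_1,\dots,x_n;\eta\setminus\{x_1,\dots,x_n\})
=\sum_{x_n\in\eta} g\big(x_n,\eta\setminus\{x_n\}\big)\,,
\end{align*}
where $g(x_n,\varphi):=\sum_{x_1,\dots,x_{n-1}\in\varphi}^{\neq} f(x_1,\dots,x_n;\varphi\setminus\{x_1,\dots,x_{n-1}\})$. The key point is that requiring $x_1,\dots,x_{n-1}$ to be pairwise distinct in $\eta\setminus\{x_n\}$ together with $x_n\in\eta$ reproduces exactly the pairwise distinctness of all $n$ points, so no over- or under-counting occurs. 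As $g\ge0$ is measurable, the already-proven $n=1$ equation applied to the outer sum gives $\ep[S]=\l\int_{\R^d}\ep[g(x_n,\eta)]\,{\rm d}x_n$. For fixed $x_n$ the inner expectation is of the $(n-1)$-fold form with the extra deterministic argument $x_n$, so the induction hypothesis turns it into $\l^{n-1}\int\cdots\int \ep[f(x_1,\dots,x_n;\eta)]\,{\rm d}x_1\cdots{\rm d}x_{n-1}$; note $x_n\notin\eta$ almost surely, hence it is automatically distinct from the $n-1$ integrated points. Combining the two displays produces the factor $\l^n$ and the $n$-fold integral.

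The step I expect to be most delicate is the base case, specifically the passage from bounded windows to all of $\R^d$ together with the attendant measurability: one must check that $x\mapsto\ep[f(x,\eta)]$ is measurable (so the right-hand integral is well defined) and that the conditioning manipulations are legitimate for a general nonnegative measurable $f$ depending on the entire configuration, not merely on finitely many coordinates. A standard monotone class reduction to indicator functions $f(x,\varphi)=\1{x\in B}\,\1{\varphi\in C}$ settles both points. In the inductive step the only genuine subtlety is the distinctness bookkeeping described above; once it is set up correctly, the two applications (outer $n=1$, inner $n-1$) compose mechanically.
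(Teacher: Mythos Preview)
The paper does not supply its own proof of this theorem; it is stated in the appendix purely as a tool, with the reader referred to \cite[Theorem 13.3]{MW03} for a proof. Your argument is correct and is in fact the standard route: establish the $n=1$ Mecke identity via the finite-window description of the Poisson process (Poisson number of i.i.d.\ uniform points, together with the restriction/superposition property), extend to all of $\R^d$ by monotone convergence using $f\ge0$, and then induct on $n$ by peeling off one point at a time. The measurability and distinctness bookkeeping you flag are indeed the places requiring care, and your handling of them is sound.
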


\medskip


\section*{Acknowledgments}
The second author would like to thank Quentin Berger for useful discussions.

This work was carried out with the financial support of the French Research Agency (ANR), project ANR-16-CE32-0007-01 (CADENCE).

\medskip

\bibliography{bibliografia}

\newcommand{\etalchar}[1]{$^{#1}$}
\begin{thebibliography}{SvdHvL18}

\bibitem[AJB99]{AJB99}
R{\'e}ka Albert, Hawoong Jeong, and Albert-L{\'a}szl{\'o} Barab{\'a}si.
\newblock Internet: Diameter of the world-wide web.
\newblock {\em nature}, 401(6749):130, 1999.

\bibitem[BA99]{BA99}
Albert-L{\'a}szl{\'o} Barab{\'a}si and R{\'e}ka Albert.
\newblock Emergence of scaling in random networks.
\newblock {\em science}, 286(5439):509--512, 1999.

\bibitem[Bar11]{B11}
Marc Barth{\'e}lemy.
\newblock Spatial networks.
\newblock {\em Physics Reports}, 499(1-3):1--101, 2011.

\bibitem[Ber02]{B02}
Noam Berger.
\newblock Transience, recurrence and critical behavior for long-range
  percolation.
\newblock {\em Communications in mathematical physics}, 226(3):531--558, 2002.

\bibitem[Bis04]{B04}
Marek Biskup.
\newblock On the scaling of the chemical distance in long-range percolation
  models.
\newblock {\em The Annals of Probability}, 32(4):2938--2977, 2004.

\bibitem[BKL18]{BKL18}
Karl Bringmann, Ralph Keusch, and Johannes Lengler.
\newblock Geometric inhomogeneous random graphs.
\newblock {\em Theoretical Computer Science}, 2018.

\bibitem[BW00]{BW00}
Alain Barrat and Martin Weigt.
\newblock On the properties of small-world network models.
\newblock {\em The European Physical Journal B-Condensed Matter and Complex
  Systems}, 13(3):547--560, 2000.

\bibitem[CF16]{CF16}
Elisabetta Candellero and Nikolaos Fountoulakis.
\newblock Clustering and the hyperbolic geometry of complex networks.
\newblock {\em Internet Mathematics}, 12(1-2):2--53, 2016.

\bibitem[CL02]{CL02}
Fan Chung and Linyuan Lu.
\newblock The average distances in random graphs with given expected degrees.
\newblock {\em Proceedings of the National Academy of Sciences},
  99(25):15879--15882, 2002.

\bibitem[CL14]{CL14}
Emilie Coupechoux and Marc Lelarge.
\newblock How clustering affects epidemics in random networks.
\newblock {\em Advances in Applied Probability}, 46(4):985--1008, 2014.

\bibitem[CMS10]{CMS10}
Rama Cont, Amal Moussa, and Edson Santos.
\newblock Network structure and systemic risk in banking systems.
\newblock 2010.

\bibitem[DEV14]{DEV14}
Bhagat~Lal Dutta, Pauline Ezanno, and Elisabeta Vergu.
\newblock Characteristics of the spatio-temporal network of cattle movements in
  france over a 5-year period.
\newblock {\em Preventive veterinary medicine}, 117(1):79--94, 2014.

\bibitem[DHW15]{DHW15}
Philippe Deprez, Rajat~Subhra Hazra, and Mario~V W{\"u}thrich.
\newblock Inhomogeneous long-range percolation for real-life network modeling.
\newblock {\em Risks}, 3(1):1--23, 2015.

\bibitem[DvdHH13]{DVH13}
Maria Deijfen, Remco van~der Hofstad, and Gerard Hooghiemstra.
\newblock Scale-free percolation.
\newblock In {\em Annales de l'Institut Henri Poincar{\'e}, Probabilit{\'e}s et
  Statistiques}, volume~49, pages 817--838. Institut Henri Poincar{\'e}, 2013.

\bibitem[DVJ08]{DVJ}
D.~J. Daley and D.~Vere-Jones.
\newblock {\em An introduction to the theory of point processes. {V}ol. {II}}.
\newblock Probability and its Applications (New York). Springer, New York,
  second edition, 2008.
\newblock General theory and structure.

\bibitem[DW18]{DW18}
Philippe Deprez and Mario~V. W{\"u}thrich.
\newblock Scale-free percolation in continuum space.
\newblock {\em Communications in Mathematics and Statistics}, Jul 2018.

\bibitem[Fel71]{F71}
William Feller.
\newblock {\em An introduction to probability theory and its applications.
  {V}ol. {II}.}
\newblock Second edition. John Wiley \& Sons Inc., New York, 1971.

\bibitem[GPP12]{GPP12}
Luca Gugelmann, Konstantinos Panagiotou, and Ueli Peter.
\newblock Random hyperbolic graphs: degree sequence and clustering.
\newblock In {\em International Colloquium on Automata, Languages, and
  Programming}, pages 573--585. Springer, 2012.

\bibitem[HAB{\etalchar{+}}09]{HABDF09}
Martin Haenggi, Jeffrey~G Andrews, Fran{\c{c}}ois Baccelli, Olivier Dousse, and
  Massimo Franceschetti.
\newblock Stochastic geometry and random graphs for the analysis and design of
  wireless networks.
\newblock {\em IEEE Journal on Selected Areas in Communications}, 27(7), 2009.

\bibitem[HHJ17]{HHJ17}
Markus Heydenreich, Tim Hulshof, and Joost Jorritsma.
\newblock Structures in supercritical scale-free percolation.
\newblock {\em The Annals of Applied Probability}, 27(4):2569--2604, 2017.

\bibitem[JM15]{JM15}
Emmanuel Jacob and Peter M{\"o}rters.
\newblock Spatial preferential attachment networks: Power laws and clustering
  coefficients.
\newblock {\em The Annals of Applied Probability}, 25(2):632--662, 2015.

\bibitem[JTA{\etalchar{+}}00]{JTAOB00}
Hawoong Jeong, B{\'a}lint Tombor, R{\'e}ka Albert, Zoltan~N Oltvai, and A-L
  Barab{\'a}si.
\newblock The large-scale organization of metabolic networks.
\newblock {\em Nature}, 407(6804):651, 2000.

\bibitem[Kin92]{K92}
John Frank~Charles Kingman.
\newblock {\em Poisson processes}, volume~3.
\newblock Clarendon Press, 1992.

\bibitem[KKGB10]{KKGB10}
Pablo Kaluza, Andrea K{\"o}lzsch, Michael~T Gastner, and Bernd Blasius.
\newblock The complex network of global cargo ship movements.
\newblock {\em Journal of the Royal Society Interface}, 7(48):1093--1103, 2010.

\bibitem[KL18]{KL18}
J{\'u}lia Komj{\'a}thy and Bas Lodewijks.
\newblock Explosion in weighted hyperbolic random graphs and geometric
  inhomogeneous random graphs.
\newblock {\em arXiv preprint arXiv:1803.04897}, 2018.

\bibitem[LFHCS00]{LHCS00}
Luis~F Lago-Fern{\'a}ndez, Ram{\'o}n Huerta, Fernando Corbacho, and Juan~A
  Sig{\"u}enza.
\newblock Fast response and temporal coherent oscillations in small-world
  networks.
\newblock {\em Physical Review Letters}, 84(12):2758, 2000.

\bibitem[MN00]{MN00}
Cristopher Moore and Mark~EJ Newman.
\newblock Epidemics and percolation in small-world networks.
\newblock {\em Physical Review E}, 61(5):5678, 2000.

\bibitem[MW03]{MW03}
Jesper Moller and Rasmus~Plenge Waagepetersen.
\newblock {\em Statistical inference and simulation for spatial point
  processes}.
\newblock Chapman and Hall/CRC, 2003.

\bibitem[New03]{N03}
Mark~EJ Newman.
\newblock The structure and function of complex networks.
\newblock {\em SIAM review}, 45(2):167--256, 2003.

\bibitem[NR06]{NR06}
Ilkka Norros and Hannu Reittu.
\newblock On a conditionally poissonian graph process.
\newblock {\em Advances in Applied Probability}, 38(1):59--75, 2006.

\bibitem[NWS02]{NWS02}
Mark~EJ Newman, Duncan~J Watts, and Steven~H Strogatz.
\newblock Random graph models of social networks.
\newblock {\em Proceedings of the National Academy of Sciences}, 99(suppl
  1):2566--2572, 2002.

\bibitem[PSV01]{PV01}
Romualdo Pastor-Satorras and Alessandro Vespignani.
\newblock Epidemic spreading in scale-free networks.
\newblock {\em Physical review letters}, 86(14):3200, 2001.

\bibitem[REI{\etalchar{+}}15]{REIMT15}
Steven Riley, Ken Eames, Valerie Isham, Denis Mollison, and Pieter Trapman.
\newblock Five challenges for spatial epidemic models.
\newblock {\em Epidemics}, 10:68--71, 2015.

\bibitem[SBA{\etalchar{+}}07]{SBAGB07}
Kimmo Soram{\"a}ki, Morten~L Bech, Jeffrey Arnold, Robert~J Glass, and Walter~E
  Beyeler.
\newblock The topology of interbank payment flows.
\newblock {\em Physica A: Statistical Mechanics and its Applications},
  379(1):317--333, 2007.

\bibitem[SvdHvL18]{SVV18}
Clara Stegehuis, Remco van~der Hofstad, and Johan~SH van Leeuwaarden.
\newblock Scale-free network clustering in hyperbolic and other random graphs.
\newblock {\em arXiv preprint arXiv:1812.03002}, 2018.

\bibitem[VDH16]{V16}
Remco Van Der~Hofstad.
\newblock {\em Random graphs and complex networks}, volume~1.
\newblock Cambridge university press, 2016.

\bibitem[WS98]{WS98}
Duncan~J Watts and Steven~H Strogatz.
\newblock Collective dynamics of ‘small-world’networks.
\newblock {\em nature}, 393(6684):440, 1998.

\bibitem[Yuk06]{Y06}
JE~Yukich.
\newblock Ultra-small scale-free geometric networks.
\newblock {\em Journal of applied probability}, 43(3):665--677, 2006.

\end{thebibliography}
\bibliographystyle{alpha}

\end{document}